\theoremstyle{plain}
\newtheorem{theorem}{Theorem}[section]
\newtheorem*{quest}{Main Question}
\newtheorem{corollary}[theorem]{Corollary}
\theoremstyle{definition}
\numberwithin{equation}{section}
\newcommand{\ipg}[2]{\langle #1,#2 \rangle_\mathbb{G}}
\newcommand{\tp}{\texttt{p}}
\renewcommand{\span}{\operatorname{span}}
\newcommand{\divergence}{\operatorname{div}}
\def\abs#1{\left|#1\right|}
\def\norm#1{\left\|#1\right\|}
\newcommand{\defeq}{\vcentcolon=}
\begin{document}

\title[Generalizations of the Drift Laplace Equation]{Generalizations of the Drift Laplace Equation over the Quaternions in a Class of Grushin-Type Spaces}

\author{Thomas Bieske}
\author{Keller L. Blackwell}
\address{Department of Mathematics\\
University of South Florida\\ 
Tampa, FL 33620, USA}
\email{tbieske@mail.usf.edu}
\address{Department of Mathematics\\
University of South Florida\\ 
Tampa, FL 33620, USA}
\email{kellerb@mail.usf.edu}

\subjclass[2010]
{Primary 53C17, 35H20; 35A09; Secondary 17B70}
\keywords{p-Laplace equation, Grushin-type plane}
\date{June 10,2019}

\begin{abstract}
Beals, Gaveau, and Greiner [1996] establish a formula for the fundamental solution to the Laplace equation with drift term in Grushin-type planes.  The first author and Childers [2013] expanded these results by invoking a p-Laplace-type generalization that encompasses these formulas while the authors [2019] explored a different natural generalization of the p-Laplace equation with drift term that also encompasses these formulas. In both, the drift term lies in the complex domain. We extend these results by considering a drift term in the quaternion realm and 
show our solutions are stable under limits as p tends to infinity.  
\end{abstract}

\maketitle
\section{Introduction and Motivation}
In \cite{BGG}, Beals, Gaveau, and Greiner establish a formula for the fundamental solution to the Laplace equation with drift term in a large class of sub-Riemannian spaces, which includes the so-called Grushin-type planes. In \cite{B:C}, the first author and Childers expanded these results by invoking a $\tp$-Laplace-type generalization that encompasses the formulas of \cite{BGG} while in \cite{BB}, the authors explored a different natural generalization of the $\tp$-Laplace equation with drift term that also encompasses the formulas of \cite{BGG}. In both cases, the drift term lies in the complex domain. In this paper, we will consider both approaches, but with a drift term in the quaternion realm and create an extension of both cases. We will then show our solutions are stable under limits when $\tp\to\infty$.

This paper is the result of an undergraduate research project by the second author under the direction of the first. The second author would like to thank the University of South Florida Honors College and the Department of Mathematics and Statistics for their support and research opportunities.
\section{Grushin-type planes}
We begin with a brief discussion of our environment. The Grushin-type planes are a class of sub-Riemannian spaces lacking an algebraic group law. We begin with $\mathbb{R}^{2}$, possessing coordinates $(y_1, y_2)$, $a\in \mathbb{R}$, $c\in \mathbb{R}\setminus\{0\}$ and $n\in \mathbb{N}$.  We use them to construct the vector fields
\begin{equation*}
Y_1  =  \frac{\partial}{\partial y_1} \ \textmd{and}\ \
Y_2  =   c(y_1-a)^n\frac{\partial}{\partial y_2}.
\end{equation*}
For these vector fields, the only (possibly) nonzero Lie bracket is
\begin{equation*}
[Y_1,Y_2]=cn(y_1-a)^{n-1}\frac{\partial}{\partial y_2}.
\end{equation*}
Because $n\in \mathbb{N}$, it follows that H\"{o}rmander's condition (see, for example, \cite{BR:SRG}) is satisfied by these vector fields.

We will put a (singular) inner product on $\mathbb{R}^{2}$, denoted $\ipg{\cdot}{\cdot}$,  with related norm $\|\cdot\|_{\mathbb{G}}$, so that the collection $\{Y_{1}, Y_{2}\}$ 
forms an orthonormal basis. We then have a sub-Riemannian space that we will call $g_{n}$, which is also the tangent space to a generalized Grushin-type plane $\mathbb{G}_n$. Points in $\mathbb{G}_n$ will also be denoted by
$p=(y_{1}, y_{2})$.  The Carnot-Carath\'{e}odory distance on $\mathbb{G}_n$ is defined for points $p$ and $q$ as follows:
\begin{eqnarray*}
d_{\mathbb{G}}(p,q)=\inf_{\Gamma}\int \|\gamma'(t)\|_{\mathbb{G}}\;dt
\end{eqnarray*} 
with $\Gamma$ the set of curves $\gamma$ such that $\gamma(0)=p$, $\gamma(1)=q$ and
$
\gamma'(t)\in \span\{Y_{1}(\gamma(t)),Y_{2}(\gamma(t))\}.
$
By Chow's theorem, this is an honest metric.

We shall now discuss calculus on the Grushin-type planes. Given a smooth function $f$ on $\mathbb{G}_n$, we define the horizontal gradient of $f$ as
\begin{equation*}
 \nabla_{0} f(p) = \big(Y_1f(p),Y_2f(p)\big).
\end{equation*}

Using these derivatives, we consider a key operator on $C^2_{\mathbb{G}}$ functions, namely the \tp-Laplacian for $1<\tp<\infty$, given by
\begin{eqnarray}
\Delta_\tp f & = & \divergence(\|\nabla_{0} f\|_{\mathbb{G}}^{\tp-2}\nabla_{0}f)   =  Y_1\big(\|\nabla_{0} f\|_{\mathbb{G}}^{\tp-2}Y_1f\big)+Y_2\big(\|\nabla_{0} f\|_{\mathbb{G}}^{\tp-2}Y_2f\big)  \nonumber\\
& = & \frac{1}{2}(\tp-2)\|\nabla_{0} f\|_{\mathbb{G}}^{\tp-4}\big(Y_1\|\nabla_{0} f\|_{\mathbb{G}}^{2}Y_1f+Y_2\|\nabla_{0} f\|_{\mathbb{G}}^{2}Y_2f\big)  \label{reductiong}\\
& & \mbox{} +\|\nabla_{0} f\|_{\mathbb{G}}^{\tp-2}\big(Y_1Y_1f+Y_2Y_2f\big).  \nonumber
\end{eqnarray}

\section{Motivating Results}
\subsection{Grushin-type Planes}
The first author and Gong \cite{BG} proved the following in the Grushin-type planes.
\begin{theorem}[\cite{BG}]\label{T6}
Let $1<\tp<\infty$ and define $$f(y_1,y_2)=c^2(y_1-a)^{(2n+2)}+(n+1)^2 (y_2-b)^2.$$  
For $\tp \neq n+2$, consider $$\tau_{\tp}=\frac{n+2-\tp}{(2n+2)(1-\tp)}$$
so that in $\mathbb{G}_n\setminus\{(a,b)\}$  we have the well-defined function
\begin{eqnarray*}
\psi_{\tp}=\left\{\begin{array}{cc}
f(y_1,y_2)^{\tau_{\tp}} & \tp \neq n+2 \\
\log f(y_1,y_2) & \tp = n+2.
\end{array}\right.
\end{eqnarray*}
Then, $\Delta_{\tp}\psi_{\tp}=0$ in $\mathbb{G}_n\setminus\{(a,b)\}$.
\end{theorem}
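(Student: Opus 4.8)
The plan is to verify the claim by a direct computation whose entire content is packed into one algebraic identity. First I would record the horizontal derivatives of $f$,
\[
Y_1 f = (2n+2)\,c^2(y_1-a)^{2n+1},\qquad Y_2 f = 2(n+1)^2\,c\,(y_1-a)^n(y_2-b),
\]
and observe that, since $(2n+2)^2 = 4(n+1)^2$, the sum of their squares factors through $f$ itself:
\[
\|\nabla_{0} f\|_{\mathbb{G}}^{2} = (Y_1f)^2+(Y_2f)^2 = 4(n+1)^2 c^2\,(y_1-a)^{2n}\,f.
\]
This is the identity on which everything rests: wherever a power of $\|\nabla_{0}f\|_{\mathbb{G}}$ appears inside $\Delta_\tp$, it may be replaced by a monomial in $(y_1-a)$ times a power of $f$.

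For $\tp\neq n+2$, I would then put $\psi_\tp=f^{\tau_\tp}$, so that $\nabla_{0}\psi_\tp=\tau_\tp f^{\tau_\tp-1}\nabla_{0}f$, and substitute into $\Delta_\tp\psi_\tp=\divergence(\|\nabla_{0}\psi_\tp\|_{\mathbb{G}}^{\tp-2}\nabla_{0}\psi_\tp)$. Using the identity to absorb $\|\nabla_{0}f\|_{\mathbb{G}}^{\tp-2}$, one gets $\|\nabla_{0}\psi_\tp\|_{\mathbb{G}}^{\tp-2}\nabla_{0}\psi_\tp=\kappa\,f^{\gamma}(y_1-a)^{n(\tp-2)}\nabla_{0}f$ for a nonzero constant $\kappa$ and the exponent $\gamma:=(\tau_\tp-1)(\tp-1)+\tfrac{\tp-2}{2}$, so the claim reduces to
\[
Y_1\!\big(f^{\gamma}(y_1-a)^{n(\tp-2)}Y_1 f\big)+Y_2\!\big(f^{\gamma}(y_1-a)^{n(\tp-2)}Y_2 f\big)=0 .
\]
Carrying out the two product-rule expansions produces terms proportional to $f^{\gamma}$ (from differentiating the explicit monomial and linear factors) together with terms proportional to $\gamma f^{\gamma-1}$ (from differentiating $f^{\gamma}$); a second application of the key identity recombines the $f^{\gamma-1}$ terms into a further multiple of $f^{\gamma}$, and after collecting everything $\Delta_\tp\psi_\tp$ becomes a nonzero constant times $(y_1-a)^{n\tp}f^{\gamma}\big[\,2(n+1)(n\tp+n+2)+4(n+1)^2\gamma\,\big]$. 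It then remains to check the purely algebraic point that the bracket vanishes exactly when $\gamma=-\tfrac{n\tp+n+2}{2n+2}$, and that this is precisely the value of $\gamma$ obtained by inserting $\tau_\tp=\tfrac{n+2-\tp}{(2n+2)(1-\tp)}$ — a short simplification, using $(\tau_\tp-1)(\tp-1)=-\tfrac{(2n+1)\tp-n}{2n+2}$.

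For the borderline case $\tp=n+2$, I would rerun the same computation with $\psi_\tp=\log f$, for which $\nabla_{0}\psi_\tp=f^{-1}\nabla_{0}f$; the argument is identical except that the relevant exponent is now $-\tfrac{\tp}{2}$, which at $\tp=n+2$ equals $-\tfrac{n+2}{2}$, the value $-\tfrac{n\tp+n+2}{2n+2}$ takes there — so the bracket again vanishes. (This matches the fact that $\log f$ is the natural limit of $f^{\tau_\tp}$ as $\tau_\tp\to 0$, i.e. $\tp\to n+2$.)

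I do not anticipate a genuine obstacle: the one non-mechanical step is spotting the identity $\|\nabla_{0}f\|_{\mathbb{G}}^{2}=4(n+1)^2c^2(y_1-a)^{2n}f$, after which every step is forced; the real effort is bookkeeping — keeping the exponents of $(y_1-a)$ and of $f$ straight through the expansion and simplifying $\tau_\tp$ at the end. One should also note that $f>0$ on $\mathbb{G}_n\setminus\{(a,b)\}$, which is exactly why that point is excised and is what makes $f^{\tau_\tp}$, $\log f$, and the intermediate divisions by $f$ meaningful.
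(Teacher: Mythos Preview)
Your argument is correct. The paper itself does not prove this theorem --- it is quoted from \cite{BG} as a motivating result in Section~3 --- so there is no in-paper proof to compare against. The factorization $\|\nabla_{0} f\|_{\mathbb{G}}^{2} = 4(n+1)^2 c^2 (y_1-a)^{2n} f$ holds as you state, your reduction of $\Delta_\tp\psi_\tp$ to a nonzero multiple of $(y_1-a)^{n\tp} f^{\gamma}\big[2(n+1)\gamma + n\tp + n + 2\big]$ is accurate (one checks $Y_1Y_1f + Y_2Y_2f = 2(n+1)(3n+2)c^2(y_1-a)^{2n}$, and the three contributions combine exactly as you describe), and the verification that $\tau_\tp$ forces $\gamma = -\tfrac{n\tp+n+2}{2n+2}$ goes through. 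The logarithmic case at $\tp = n+2$ also works as you indicate. This direct-factorization approach is the standard one for such radial $\tp$-harmonic functions in the Grushin setting and is essentially how the result is established in \cite{BG}.
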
 
In the Grushin-type planes, Beals, Gaveau and Greiner \cite{BGG} extend this equation as shown in the following theorem. 
\begin{theorem}[\cite{BGG}]\label{MG}
Let $L \in \mathbb{R}$. Consider the following quantities,
\begin{equation*}
\alpha = \frac{-n}{(2n+2)}(1+L) \ \ \textmd{and}\ \ \beta  =  \frac{-n}{(2n+2)}(1-L).
\end{equation*}
We use these constants with the functions
\begin{eqnarray*}
g(y_1,y_2) & = & c(y_1-a)^{n+1}+i(n+1)(y_2-b)\\ 
h(y_1,y_2) & = & c(y_1-a)^{n+1}-i(n+1)(y_2-b)
\end{eqnarray*}
to define our main function $f(y_1,y_2)$, given by
\begin{eqnarray*}
f(y_1,y_2) & = & g(y_1,y_2)^{\alpha}h(y_1,y_2)^{\beta}.
\end{eqnarray*}
Then, $\mathcal{D}(f) \defeq \Delta_{2}f+iL[Y_1,Y_2]f=0$ in $\mathbb{G}_n\setminus\{(a,b)\}$.
\end{theorem}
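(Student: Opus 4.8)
The plan is to collapse the whole identity $\mathcal{D}(f)=0$ to a single scalar equation by passing to the logarithmic derivative. Since $g$ and $h$ vanish only at $(a,b)$, the function $f=g^{\alpha}h^{\beta}$ is non-vanishing and (locally) smooth on $\mathbb{G}_{n}\setminus\{(a,b)\}$, so I would set $u=\alpha\log g+\beta\log h$, whence $f=e^{u}$. Because $Y_{1}$, $Y_{2}$ and $[Y_{1},Y_{2}]$ are first-order operators with real coefficients, $Xf=f\,Xu$ for each $X\in\{Y_{1},Y_{2},[Y_{1},Y_{2}]\}$ and $Y_{i}Y_{i}f=f\big((Y_{i}u)^{2}+Y_{i}Y_{i}u\big)$; hence $\mathcal{D}(f)=f\cdot Q$, where
\[
Q=(Y_{1}u)^{2}+(Y_{2}u)^{2}+Y_{1}Y_{1}u+Y_{2}Y_{2}u+iL\,[Y_{1},Y_{2}]u .
\]
As $f\neq 0$, it is enough to prove $Q\equiv 0$; and since this is a rational identity in $(y_{1},y_{2})$ it is insensitive to any branch choice for the complex powers.

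Next I would record the elementary derivatives everything rests on. Writing $w=c(n+1)(y_{1}-a)^{n}$ and $v=Y_{1}w=cn(n+1)(y_{1}-a)^{n-1}$, a direct computation gives $Y_{1}g=Y_{1}h=w$, $Y_{2}g=iw$, $Y_{2}h=-iw$, $Y_{2}w=0$, and $[Y_{1},Y_{2}]g=iv$, $[Y_{1},Y_{2}]h=-iv$; in addition $g+h=2c(y_{1}-a)^{n+1}$ and one has the algebraic identity $(g+h)\,v=\tfrac{2n}{n+1}\,w^{2}$. With these, every term of $Q$ becomes a rational function of $g,h,v,w$.

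Assembling $Q$: from $Y_{1}u=w(\tfrac{\alpha}{g}+\tfrac{\beta}{h})$ and $Y_{2}u=iw(\tfrac{\alpha}{g}-\tfrac{\beta}{h})$, a difference-of-squares cancellation gives $(Y_{1}u)^{2}+(Y_{2}u)^{2}=4\alpha\beta\,w^{2}/(gh)$; differentiating once more, the $w^{2}/g^{2}$ and $w^{2}/h^{2}$ contributions cancel between $Y_{1}Y_{1}u$ and $Y_{2}Y_{2}u$, leaving $Y_{1}Y_{1}u+Y_{2}Y_{2}u=v(\tfrac{\alpha}{g}+\tfrac{\beta}{h})$; and $[Y_{1},Y_{2}]u=iv(\tfrac{\alpha}{g}-\tfrac{\beta}{h})$, so $iL\,[Y_{1},Y_{2}]u=-Lv(\tfrac{\alpha}{g}-\tfrac{\beta}{h})$. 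Collecting,
\[
Q=\frac{4\alpha\beta\,w^{2}}{gh}+v\!\left(\frac{\alpha(1-L)}{g}+\frac{\beta(1+L)}{h}\right).
\]

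The only genuine computation — the rest being bookkeeping — is where the specific values of $\alpha,\beta$ enter: from the definitions, $\alpha(1-L)=\beta(1+L)=-\tfrac{n}{2n+2}(1-L^{2})$ and $4\alpha\beta=\tfrac{n^{2}}{(n+1)^{2}}(1-L^{2})$. The first identity collapses the bracketed term of $Q$ to $-\tfrac{n}{2n+2}(1-L^{2})\,v\,(g+h)/(gh)$, which by $(g+h)v=\tfrac{2n}{n+1}w^{2}$ equals $-\tfrac{n^{2}}{(n+1)^{2}}(1-L^{2})\,w^{2}/(gh)$; the second identity shows this is precisely $-4\alpha\beta\,w^{2}/(gh)$. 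Hence $Q=0$, i.e.\ $\mathcal{D}(f)=0$ on $\mathbb{G}_{n}\setminus\{(a,b)\}$. I anticipate no real obstacle here; the only mild subtlety is bookkeeping the cancellations cleanly (and, if one wishes to be careful, remarking that the branch ambiguity of $g^{\alpha}h^{\beta}$ on the non-simply-connected set $\mathbb{G}_{n}\setminus\{(a,b)\}$ is irrelevant to this local PDE identity).
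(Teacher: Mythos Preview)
Your argument is correct: the logarithmic substitution $u=\alpha\log g+\beta\log h$ reduces $\mathcal{D}(f)=0$ to the vanishing of a rational expression $Q$, and your bookkeeping of the cancellations --- in particular $(Y_1u)^2+(Y_2u)^2=4\alpha\beta\,w^2/(gh)$, $Y_1Y_1u+Y_2Y_2u=v(\alpha/g+\beta/h)$, and the key identity $(g+h)v=\tfrac{2n}{n+1}w^2$ --- checks out line by line.

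As to comparison: the paper does not actually prove this theorem; it is quoted from \cite{BGG} as a motivating result. The closest the paper comes is its proofs of the generalizations (Theorems~\ref{ChildersClaimA} and~\ref{GrushinClaimA}), which when specialized to $\tp=2$ and $Q=iL$ cover this statement. Those proofs proceed by brute-force expansion: they compute $Y_1f$, $Y_2f$, $Y_1Y_1f$, $Y_2Y_2f$, $\|\nabla_0 f\|^2$, etc.\ directly in terms of $g^{\alpha-1}h^{\beta-1}$, $g^{\alpha-2}h^{\beta-2}$ and so on, and then verify that the assembled expression collapses. Your route is genuinely different and cleaner for the $\tp=2$ case: factoring out $f$ via $u=\log f$ kills the floating powers of $g$ and $h$ at the outset, so the cancellations become transparent two-line identities rather than multi-line symbolic reductions. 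The trade-off is that the logarithmic trick relies on $\mathcal{D}$ being built from first- and second-order pieces acting on a single $f$; for the $\tp$-Laplace generalizations the nonlinearity $\|\nabla_0 f\|^{\tp-2}$ makes the factorization less automatic, which is presumably why the paper opts for direct computation throughout.
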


Non-linear generalizations of Theorem \ref{MG} have been explored by the first author and Childers in \cite{B:C} and by the authors in \cite{BB}. The following theorem  extends Theorem \ref{MG} through a $\tp$-Laplace type divergence form.

\begin{theorem}[\cite{B:C}]\label{childersdiv}
For $L \in \mathbb{R}$ with $L \ne \pm 1$, consider the following parameters for $\tp \ne n+2$:
\begin{equation*}
    \alpha = \frac{n+2-\tp}{(1-\tp)(2n+2)}(1+L) \quad\text{and}\quad \beta = \frac{n+2-\tp}{(1-\tp)(2n+2)}(1-L)
\end{equation*}
with the functions:
\begin{eqnarray*}
g(y_1, y_2) &=& c(y_1 - a)^{n+1} + i(n+1)(y_2-b)\\
h(y_1, y_2) &=& c(y_1 - a)^{n+1} - i(n+1)(y_2-b)
\end{eqnarray*}
to define the main function:
\begin{equation*}
    f_{\tp, L} = \left\{\begin{array}{cc}
g(y_1,y_2)^{\alpha} h(y_1,y_2)^{\beta} & \tp \ne n+2 \\
\log\big(g(y_1, y_2)^{1+L} h(y_1, y_2)^{1-L}\big) & \tp = n+2.
\end{array}\right.
\end{equation*}
Then
\begin{equation*}
    \overline{\Delta_\tp} f_{\tp, L} \defeq \divergence\left( \begin{Vmatrix}
Y_1 f_{\tp, L} + iL Y_2 f_{\tp, L}\\
Y_2 f_{\tp, L} - iL Y_1 f_{\tp, L}
\end{Vmatrix}^{\tp-2}_\mathbb{G} \begin{pmatrix}
Y_1 f_{\tp, L} + iL Y_2 f_{\tp, L}\\
Y_2 f_{\tp, L} - iL Y_1 f_{\tp, L}
\end{pmatrix}  \right) = 0.
\end{equation*}
\end{theorem}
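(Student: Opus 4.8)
The plan is to reduce Theorem~\ref{childersdiv} to Theorem~\ref{T6} by identifying the drift-twisted horizontal field inside $\overline{\Delta_\tp}$ as a scalar multiple of $\nabla_0\psi_\tp$, the scalar depending only on $g/h$. Throughout write $W=c(n+1)(y_1-a)^n$ and $F=gh=c^2(y_1-a)^{2n+2}+(n+1)^2(y_2-b)^2$, so that $F$ is precisely the function playing the role of $f$ in Theorem~\ref{T6} and $\psi_\tp=F^{\tau_\tp}$ (respectively $\log F$ when $\tp=n+2$) is the function produced there. First I would record the elementary identities $Y_1g=Y_1h=W$, $Y_2g=iW$, $Y_2h=-iW$; equivalently $(Y_1+iY_2)g=0$ and $(Y_1-iY_2)h=0$, so $g$ and $h$ form a ``holomorphic/antiholomorphic'' pair. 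Two consequences will be used repeatedly: (i) both $g$ and $h$ are \emph{null} for the formal bilinear Grushin inner product, since $\ipg{\nabla_0 g}{\nabla_0 g}=(Y_1g)^2+(Y_2g)^2=W^2+(iW)^2=0$ and likewise for $h$, whence, expanding $\nabla_0(g/h)$ and $\nabla_0(gh)$ in terms of $\nabla_0 g$ and $\nabla_0 h$, one gets $\ipg{\nabla_0(g/h)}{\nabla_0(gh)}=0$; and (ii) since the stated parameters equal $\alpha=\tau_\tp(1+L)$ and $\beta=\tau_\tp(1-L)$ with $\tau_\tp$ the constant of Theorem~\ref{T6}, one has the factorization $f_{\tp,L}=\psi_\tp\,(g/h)^{\tau_\tp L}$ for $\tp\neq n+2$ and $f_{\tp,L}=\psi_{n+2}+L\log(g/h)$ for $\tp=n+2$, all single-valued on $\mathbb{G}_n\setminus\{(a,b)\}$ because $g$ and $h$ do not vanish there.

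Next I would compute the twisted field
\[
\mathbf{v}:=\big(Y_1 f_{\tp,L}+iL\,Y_2 f_{\tp,L},\ Y_2 f_{\tp,L}-iL\,Y_1 f_{\tp,L}\big),
\]
which is the argument of the divergence in $\overline{\Delta_\tp}$. Feeding $Y_2g=iY_1g$ and $Y_2h=-iY_1h$ into the product rule for $g^\alpha h^\beta$ --- most transparently after forming $v_1\pm iv_2$, which involve only $(Y_1+iY_2)f_{\tp,L}$ and $(Y_1-iY_2)f_{\tp,L}$ --- the $\alpha$- and $\beta$-contributions collapse through $\alpha(1-L)=\beta(1+L)=\tau_\tp(1-L^2)$, yielding
\[
\mathbf{v}=(1-L^2)\,(g/h)^{\tau_\tp L}\,\nabla_0\psi_\tp\qquad(\tp\neq n+2),
\]
and $\mathbf{v}=(1-L^2)\,\nabla_0\psi_{n+2}$ in the logarithmic case. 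This is exactly where $L\neq\pm 1$ enters: it keeps the scalar nonzero so that $\|\mathbf{v}\|_{\mathbb{G}}^{\tp-2}$ is meaningful. Since the same scalar sits in both components, it factors out of the (bilinear) norm, and hence
\[
\|\mathbf{v}\|_{\mathbb{G}}^{\tp-2}\,\mathbf{v}=(1-L^2)^{\tp-1}\,(g/h)^{\tau_\tp L(\tp-1)}\,\|\nabla_0\psi_\tp\|_{\mathbb{G}}^{\tp-2}\,\nabla_0\psi_\tp .
\]

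Finally I would substitute into the operator. For $\tp\neq n+2$ this gives $\overline{\Delta_\tp}f_{\tp,L}=(1-L^2)^{\tp-1}\,\divergence\big((g/h)^{\tau_\tp L(\tp-1)}\,\|\nabla_0\psi_\tp\|_{\mathbb{G}}^{\tp-2}\,\nabla_0\psi_\tp\big)$, and the Leibniz rule $\divergence(\phi\,\mathbf{u})=\phi\,\divergence(\mathbf{u})+\ipg{\nabla_0\phi}{\mathbf{u}}$ splits it into $(g/h)^{\tau_\tp L(\tp-1)}\,\Delta_\tp\psi_\tp$, which is $0$ by Theorem~\ref{T6}, plus a term proportional to $\ipg{\nabla_0(g/h)}{\,\|\nabla_0\psi_\tp\|_{\mathbb{G}}^{\tp-2}\nabla_0\psi_\tp}$. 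In the latter, $\nabla_0\psi_\tp=\tau_\tp F^{\tau_\tp-1}\nabla_0 F$ is a scalar multiple of $\nabla_0(gh)$, so the term vanishes by $\ipg{\nabla_0(g/h)}{\nabla_0(gh)}=0$. When $\tp=n+2$ the $(g/h)$ factor is absent, $\|\mathbf{v}\|_{\mathbb{G}}^{\tp-2}\mathbf{v}$ is a constant multiple of $\|\nabla_0\psi_{n+2}\|_{\mathbb{G}}^{\,n}\,\nabla_0\psi_{n+2}$, and $\overline{\Delta_{n+2}}f_{n+2,L}=(1-L^2)^{n+1}\Delta_{n+2}\psi_{n+2}=0$ straight from Theorem~\ref{T6}. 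I expect the main obstacle to be organizational rather than conceptual: keeping the fractional complex powers of $g$ and $h$ single-valued away from $(a,b)$, and verifying the parameter identities ($\alpha=\tau_\tp(1+L)$, $\beta=\tau_\tp(1-L)$, and the ensuing cancellation $\alpha(1-L)=\beta(1+L)$) that force $\mathbf{v}$ onto a multiple of $\nabla_0\psi_\tp$; once those are settled, the divergence computation is a single Leibniz rule together with the null identities for $g,h$ and a citation of Theorem~\ref{T6}.
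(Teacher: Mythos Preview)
Your proof is correct and takes a genuinely different route from the one the paper (and its source \cite{B:C}) uses. The paper's approach, visible in the proof of the quaternionic analogue Theorem~\ref{ChildersClaimA}, is a head-on computation: expand $\overline{\Delta_\tp}f=\|\Upsilon\|^{\tp-4}\big(\tfrac{\tp-2}{2}\sum_sY_s\|\Upsilon\|^2\Upsilon_s+\|\Upsilon\|^2(Y_1\Upsilon_1+Y_2\Upsilon_2)\big)$, work out each derivative of $g^\alpha h^\beta$ explicitly, and watch the two pieces cancel. Your argument instead exploits the factorization $f_{\tp,L}=\psi_\tp\,(g/h)^{\tau_\tp L}$ together with the identity $\alpha(1-L)=\beta(1+L)=\tau_\tp(1-L^2)$ to recognize the twisted field $\mathbf v$ as $(1-L^2)(g/h)^{\tau_\tp L}\nabla_0\psi_\tp$, then applies a single Leibniz rule and the orthogonality $\ipg{\nabla_0(g/h)}{\nabla_0(gh)}=0$ to reduce everything to $\Delta_\tp\psi_\tp=0$ from Theorem~\ref{T6}. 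I checked your key identities (the null relations for $g,h$, the collapse of $v_1\pm iv_2$, and the logarithmic case) and they go through.

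What each approach buys: yours is more conceptual and explains \emph{why} the drift-twisted operator annihilates $f_{\tp,L}$---the drift merely multiplies $\nabla_0\psi_\tp$ by a function of $g/h$, which is $\ipg{\cdot}{\cdot}$-orthogonal to $\nabla_0(gh)$. The paper's brute-force method, by contrast, does not rely on the commutative factorization $g^\alpha h^\beta=(gh)^{\tau_\tp}(g/h)^{\tau_\tp L}$ and so transfers verbatim to the quaternionic Theorems~\ref{ChildersClaimA}--\ref{ChildersClaimB}, where $g$ and $h$ need not commute and your factorization would require additional justification.
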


The following theorem of the authors takes an alternative approach to extending Theorem \ref{MG} through a generalization of the drift term.

\begin{theorem}[\cite{BB}]\label{BBpdrift}
For $L \in \mathbb{R}$ with:
\begin{equation*}
    L \ne \pm \frac{n+2-\tp}{n(1-\tp)}
\end{equation*}
consider the parameters:
\begin{eqnarray*}
    \alpha = \frac{n+2-\tp - Ln(1- \tp)}{2(n+1)(1-\tp)} & \textmd{and} & \beta = \frac{ n+2-\tp + Ln(1- \tp)}{2(n+1)(1-\tp)}
\end{eqnarray*}
with the functions
\begin{eqnarray*}
g(y_1,y_2) & = & c(y_1-a)^{n+1}+i(n+1)(y_2-b)\\ 
h(y_1,y_2) & = & c(y_1-a)^{n+1}-i(n+1)(y_2-b)
\end{eqnarray*}
to define the main function:
\begin{eqnarray}
 f_{\tp, L}(y_1, y_2) = g(y_1, y_2)^\alpha h(y_1, y_2)^\beta \:.
\end{eqnarray}
Then on $\mathbb{G}_n \setminus \lbrace (a,b) \rbrace$, we have:
\begin{eqnarray*}
\mathcal{G}_{\tp, L} \left( f_{\tp, L} \right) \defeq \Delta_{\tp} f_{\tp, L} + iL \left[ Y_1, Y_2 \right] \left( \norm{\nabla_0 f_{\tp, L}}^{\tp -2}_\mathbb{G} f_{\tp, L}  \right) = 0.
\end{eqnarray*}
\end{theorem}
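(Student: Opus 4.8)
The plan is a direct computation that collapses the identity $\mathcal{G}_{\tp,L}(f_{\tp,L})=0$ to a single linear identity in the two holomorphic coordinates $g$ and $h$. Throughout, abbreviate $w \defeq c(n+1)(y_1-a)^n$. Since $Y_1=\partial_{y_1}$ and $Y_2=c(y_1-a)^n\partial_{y_2}$, one records at once the first-order identities $Y_1 g = Y_1 h = w$, $Y_2 g = iw$, $Y_2 h = -iw$, $Y_2 w = 0$, and $Y_1 w = \tfrac{n}{y_1-a}w$; it is also convenient to note $w = \tfrac{n+1}{2(y_1-a)}(g+h)$, which comes from $g+h = 2c(y_1-a)^{n+1}$. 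On $\mathbb{G}_n\setminus\{(a,b)\}$ both $g$ and $h$ are nonvanishing, so all fractional powers below are locally well defined. Finally, a short computation shows that the excluded values $L=\pm\frac{n+2-\tp}{n(1-\tp)}$ are precisely those for which $\alpha\beta=0$; away from them $\alpha\beta\neq 0$, which is what keeps the $\tp$-Laplacian and the drift term nondegenerate.

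First I would compute the horizontal gradient of $f_{\tp,L}=g^\alpha h^\beta$:
\begin{equation*}
Y_1 f_{\tp,L} = w\,g^{\alpha-1}h^{\beta-1}(\alpha h + \beta g), \qquad Y_2 f_{\tp,L} = i\,w\,g^{\alpha-1}h^{\beta-1}(\alpha h - \beta g).
\end{equation*}
Squaring and adding, the cross terms cancel via $(\alpha h+\beta g)^2-(\alpha h-\beta g)^2 = 4\alpha\beta\,gh$, giving the crucial simplification
\begin{equation*}
\norm{\nabla_0 f_{\tp,L}}_{\mathbb{G}}^2 = 4\alpha\beta\,w^2 g^{2\alpha-1}h^{2\beta-1},
\end{equation*}
so that $\norm{\nabla_0 f_{\tp,L}}_{\mathbb{G}}^{\tp-2}$ is the monomial $(4\alpha\beta)^{(\tp-2)/2}w^{\tp-2}g^{(2\alpha-1)(\tp-2)/2}h^{(2\beta-1)(\tp-2)/2}$. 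Writing $C\defeq(4\alpha\beta)^{(\tp-2)/2}$, $P\defeq\alpha(\tp-1)-\tfrac{\tp}{2}$, and $Q\defeq\beta(\tp-1)-\tfrac{\tp}{2}$, one then gets
\begin{equation*}
\norm{\nabla_0 f_{\tp,L}}_{\mathbb{G}}^{\tp-2}\nabla_0 f_{\tp,L} = C\,w^{\tp-1}g^{P}h^{Q}\big(\alpha h + \beta g,\ i(\alpha h - \beta g)\big), \qquad \norm{\nabla_0 f_{\tp,L}}_{\mathbb{G}}^{\tp-2}f_{\tp,L} = C\,w^{\tp-2}g^{P+1}h^{Q+1}.
\end{equation*}

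Next I would feed these into $\Delta_\tp f_{\tp,L} = Y_1\!\big(\norm{\nabla_0 f_{\tp,L}}_{\mathbb{G}}^{\tp-2}Y_1 f_{\tp,L}\big) + Y_2\!\big(\norm{\nabla_0 f_{\tp,L}}_{\mathbb{G}}^{\tp-2}Y_2 f_{\tp,L}\big)$ via the Leibniz rule and the identities above. The terms in which $Y_1$ lands on $w$ contribute a factor $\tfrac{n}{y_1-a}$, while terms in which $Y_1$ or $Y_2$ lands on $g$ or $h$ contribute an extra factor $w$; after collecting, every monomial $g^{P-1}h^{Q+1}$ and $g^{P+1}h^{Q-1}$ cancels and one is left with
\begin{equation*}
\Delta_\tp f_{\tp,L} = C(\tp-1)\tfrac{n}{y_1-a}\,w^{\tp-1}g^{P}h^{Q}(\alpha h + \beta g) + C\big[4\alpha\beta(\tp-1)-(\tp-2)(\alpha+\beta)\big]w^{\tp}g^{P}h^{Q}.
\end{equation*}
For the drift term, using $[Y_1,Y_2]=cn(y_1-a)^{n-1}\partial_{y_2}=\tfrac{n}{y_1-a}Y_2$ together with $\partial_{y_2}g=i(n+1)=-\partial_{y_2}h$, one finds
\begin{equation*}
iL[Y_1,Y_2]\big(\norm{\nabla_0 f_{\tp,L}}_{\mathbb{G}}^{\tp-2}f_{\tp,L}\big) = -\tfrac{Ln}{y_1-a}\,C\,w^{\tp-1}g^{P}h^{Q}\big((P+1)h - (Q+1)g\big).
\end{equation*}

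The decisive step is then to dispose of the lone $w^{\tp}$ term: it does not vanish on its own, but using $w=\tfrac{n+1}{2(y_1-a)}(g+h)$ it also acquires the common prefactor $\tfrac{C}{y_1-a}w^{\tp-1}g^{P}h^{Q}$, so that $\mathcal{G}_{\tp,L}(f_{\tp,L})$ equals $\tfrac{Cn}{y_1-a}w^{\tp-1}g^{P}h^{Q}$ times a linear form in $g$ and $h$. Setting the coefficients of $h$ and of $g$ in that form to zero gives two scalar equations; their difference reduces, after substitution, to $(\tp-1)(\alpha-\beta) = L\big[(\alpha+\beta)(\tp-1)-(\tp-2)\big]$, and their sum to $(\tp-1)(\alpha+\beta)-L(\tp-1)(\alpha-\beta) = -\tfrac{n+1}{n}\big[4\alpha\beta(\tp-1)-(\tp-2)(\alpha+\beta)\big]$. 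Both are then verified by plugging in $\alpha+\beta=\tfrac{n+2-\tp}{(n+1)(1-\tp)}$ and $\alpha-\beta=-\tfrac{Ln}{n+1}$ (equivalently, the stated formulas for $\alpha$ and $\beta$), each collapsing to the elementary identity $(n+2-\tp)+(\tp-2)(n+1)=n(\tp-1)$. I expect the only genuine obstacle to be the bookkeeping in the two displayed evaluations of $\Delta_\tp f_{\tp,L}$ and the drift term, and the care needed to recognize that the $w^{\tp}$ contribution must first be absorbed through the relation $w=\tfrac{n+1}{2(y_1-a)}(g+h)$ before the final cancellation can be read off.
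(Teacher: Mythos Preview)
Your computation of $\norm{\nabla_0 f_{\tp,L}}_{\mathbb{G}}^2$ is where the argument breaks. You write ``squaring and adding'' and obtain $(Y_1 f)^2+(Y_2 f)^2 = 4\alpha\beta\,w^2 g^{2\alpha-1}h^{2\beta-1}$, but for a complex-valued $f$ the gradient norm used in this paper (and in \cite{BB}) is the Hermitian one, $\norm{\nabla_0 f}_{\mathbb{G}}^2 = Y_1 f\,\overline{Y_1 f}+Y_2 f\,\overline{Y_2 f}$. Since $h=\overline{g}$ here, one has $\overline{Y_1 f}=w\,g^{\beta-1}h^{\alpha-1}(\alpha g+\beta h)$ and $\overline{Y_2 f}=-iw\,g^{\beta-1}h^{\alpha-1}(\alpha g-\beta h)$, and the correct result is
\[
\norm{\nabla_0 f_{\tp,L}}_{\mathbb{G}}^2 \;=\; 2(\alpha^2+\beta^2)\,w^2\,(gh)^{\alpha+\beta-1},
\]
which is real and positive, as a norm must be. Your expression $4\alpha\beta\,w^2 g^{2\alpha-1}h^{2\beta-1}$ is generically complex (the exponents on $g$ and $h$ differ), so already at this step the quantity you are raising to the $(\tp-2)/2$ power is not the object appearing in $\Delta_\tp$. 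Everything downstream---the exponents $P,Q$, the formula for $\Delta_\tp f$, and the linear-form reduction---is built on this incorrect weight and therefore does not establish the stated identity.

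For comparison, the paper's own argument (carried out for the quaternionic extension in Theorem~\ref{GrushinClaimA}, of which the present statement is the $Q=iL$ special case) proceeds with the Hermitian norm, computes $\Delta_\tp f$ and $iL[Y_1,Y_2](\norm{\nabla_0 f}^{\tp-2}f)$ separately as explicit monomials in $(y_1-a)$, $g$, $h$, and simply observes they are negatives of one another. Your overall strategy---reduce $\mathcal{G}_{\tp,L}(f)$ to a common prefactor times a linear form in $g,h$ and kill the two coefficients---is perfectly reasonable and would work, but it has to start from the correct $\norm{\nabla_0 f}^2$; once you replace $4\alpha\beta\,g^{2\alpha-1}h^{2\beta-1}$ by $2(\alpha^2+\beta^2)(gh)^{\alpha+\beta-1}$, the bookkeeping changes (in particular the role you assign to $\alpha\beta\neq 0$ disappears, and the relevant constant is $\alpha^2+\beta^2$), and the two scalar identities you arrive at will be different from the ones you wrote.
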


\begin{quest}
We wish to extend the preceding generalizations of Theorem \ref{MG} over the quaternions, denoted $\mathbb{H}$. Recall that the solved partial differential equation of Theorem \ref{MG},
\begin{equation*}
    \Delta_2 f + iL[Y_1, Y_2] f = 0,
\end{equation*}
features a drift term bearing the purely complex-imaginary coefficient $iL \in \mathbb{C}$. We ask if this coefficient can be generalized to a purely quaternion-imaginary coefficient of the form:
\begin{equation*}
    Q = Li + Mj + Nk \in \mathbb{H} \setminus \mathbb{R}  
\end{equation*}
where the case of $Q = 0$ reduces to the result of Theorem \ref{T6}. With respect to Theorem \ref{childersdiv}, we explore smooth solutions to the generalization:
\begin{equation*}
    \overline{\Delta_\tp} f \defeq \divergence\left( \begin{Vmatrix}
Y_1 f + Q Y_2 f\\
Y_2 f - Q Y_1 f
\end{Vmatrix}^{\tp-2}_\mathbb{G} \begin{pmatrix}
Y_1 f + Q Y_2 f\\
Y_2 f - Q Y_1 f
\end{pmatrix}  \right) = 0.
\end{equation*}
With respect to Theorem \ref{BBpdrift}, we explore smooth solutions to the generalization:
\begin{eqnarray*}
\mathcal{G}_{\tp, Q} \left( f \right) \defeq \Delta_{\tp} f + Q \left[ Y_1, Y_2 \right] \left( \norm{\nabla_0 f}^{\tp -2}_\mathbb{G} f  \right) = 0.
\end{eqnarray*}
\end{quest}

\section{A $\tp$-Laplacian Type Generalization over $\mathbb{H}$}
\subsection{Case I: $\mathbf{L + M + N \ne 0}$} \mbox{}\\

Let $Q = Li + Mj +Nk \in \mathbb{H} \setminus\mathbb{R}$ with $L + M + N \ne 0$. We consider the following parameters:
\begin{eqnarray*}
	\mu &=& \frac{\sqrt{\abs{Q^2}}}{\abs{L + M + N}}\\
	\omega &=& \frac{Q}{L + M + N}\\
    \xi &=& \sqrt{\abs{Q^2}} (L + M + N)\\
    \alpha &=& \frac{n+2-\tp}{(1-\tp)(2n+2)} (1 + \xi)\\
\textmd{and\ }    \beta &=& \frac{n+2-\tp}{(1-\tp)(2n+2)} (1-\xi)
\end{eqnarray*}
where $\xi \ne \pm 1$. We use these constants with the functions:
\begin{eqnarray*}
	 g(y_1, y_2) &=& \mu c(y_1-a)^{n+1} + \omega (n+1)(y_2 - b)\\
    h(y_1, y_2) &=& \mu c(y_1-a)^{n+1} - \omega (n+1)(y_2 - b)
\end{eqnarray*}
to define our main function:
\begin{equation}\label{ChildersCoreA}
f_{\tp, Q} (y_1, y_2) = \left\{\begin{array}{cc}
g(y_1,y_2)^{\alpha} h(y_1,y_2)^{\beta} & \tp \ne n+2 \\
\log\big(g(y_1, y_2)^{1+\xi} h(y_1, y_2)^{1-\xi}\big) & \tp = n+2.
\end{array}\right.
\end{equation}
Using Equation \ref{ChildersCoreA}, we have the following theorem.

\begin{theorem}\label{ChildersClaimA}
Let $Q = Li + Mj +Nk \in \mathbb{H} \setminus \mathbb{R}$ with $L + M + N \ne 0$. On $G_n \setminus \lbrace (a,b) \rbrace$, \\ we have:
\begin{equation*}
\overline{\Delta_\tp} f_{\tp, Q} \defeq \divergence_\mathbb{G} \left( \begin{Vmatrix}
Y_1 f_{\tp, Q} + Q Y_2 f_{\tp, Q}\\
Y_2 f_{\tp, Q} - Q Y_1 f_{\tp, Q}
\end{Vmatrix}^{\tp-2}_\mathbb{G} \begin{pmatrix}
Y_1 f_{\tp, Q} + Q Y_2 f_{\tp, Q}\\
Y_2 f_{\tp, Q} - Q Y_1 f_{\tp, Q}
\end{pmatrix}  \right) = 0.
\end{equation*}
\end{theorem}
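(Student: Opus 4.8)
The plan is to notice that, although $Q$ is a true quaternion, every quantity entering the equation in fact takes values in a commutative subfield of $\mathbb{H}$ isomorphic to $\mathbb{C}$, which reduces the statement to a complex computation of the kind behind Theorem~\ref{childersdiv}. Since $Q=Li+Mj+Nk$ has no real part, $Q^{2}=-(L^{2}+M^{2}+N^{2})$ is a strictly negative real number, so $\sqrt{\abs{Q^{2}}}=\sqrt{L^{2}+M^{2}+N^{2}}$ and $\mathbf{u}:=\omega/\mu$ satisfies $\mathbf{u}^{2}=\omega^{2}/\mu^{2}=Q^{2}/\abs{Q^{2}}=-1$. Hence $\mathbb{R}\oplus\mathbb{R}\mathbf{u}\subset\mathbb{H}$ is a field isomorphic to $\mathbb{C}$ via $\mathbf{u}\mapsto i$, and one checks that $g$, $h$, $f_{\tp,Q}$, the coefficient $Q=\mu(L+M+N)\,\mathbf{u}$, and both entries $Y_{1}f_{\tp,Q}+QY_{2}f_{\tp,Q}$ and $Y_{2}f_{\tp,Q}-QY_{1}f_{\tp,Q}$ take values in this field and therefore commute. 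Writing $A:=c(y_{1}-a)^{n+1}$ and $B:=(n+1)(y_{2}-b)$ we get $g=\mu(A+\mathbf{u}B)$, $h=\mu(A-\mathbf{u}B)$, $gh=\mu^{2}(A^{2}+B^{2})$, and $f_{\tp,Q}=\mu^{\,\alpha+\beta}(A+\mathbf{u}B)^{\alpha}(A-\mathbf{u}B)^{\beta}$; on $\mathbb{G}_{n}\setminus\{(a,b)\}$, where $A$ and $B$ never vanish together, $g$ and $h$ are nonzero, so local branches make $f_{\tp,Q}$ smooth there. Since all operations building $\overline{\Delta_{\tp}}$ (the $Y_{i}$, multiplication by $Q$, and the bilinear extension of $\norm{\cdot}_{\mathbb{G}}$ with its fractional power) preserve this field, the conclusion is an identity valued in $\mathbb{R}\oplus\mathbb{R}\mathbf{u}$.

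Once the positive real constant $\mu^{\,\alpha+\beta}$ is divided out of $f_{\tp,Q}$, which leaves $\overline{\Delta_{\tp}}f_{\tp,Q}=0$ unchanged (the operator is $\tp-1$ homogeneous), the function $(A+\mathbf{u}B)^{\alpha}(A-\mathbf{u}B)^{\beta}$ is, up to the normalizations built into $\mu,\omega,\xi$, the core function $f_{\tp,L}$ of Theorem~\ref{childersdiv} with $i$ replaced by $\mathbf{u}$ and $L$ by $\xi$, while $Q$ plays the role of $\mathbf{u}$ times the drift parameter; so the claimed identity should follow from Theorem~\ref{childersdiv} transported through $\mathbf{u}\mapsto i$, and in any case it can be checked directly as follows. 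Compute $Y_{1}g=Y_{1}h=\mu\rho$ and $Y_{2}g=-Y_{2}h=\omega\rho$, where $\rho:=c(n+1)(y_{1}-a)^{n}$; these are the Cauchy--Riemann-type relations $Y_{2}g=\mathbf{u}Y_{1}g$, $Y_{2}h=-\mathbf{u}Y_{1}h$. Then $Y_{1}f_{\tp,Q}=\tfrac{\mu\rho f_{\tp,Q}}{gh}(\alpha h+\beta g)$ and $Y_{2}f_{\tp,Q}=\tfrac{\omega\rho f_{\tp,Q}}{gh}(\alpha h-\beta g)$; substituting into the defining vector $V=(V_{1},V_{2})$ and using $Q\omega=Q^{2}/(L+M+N)=-\mu^{2}(L+M+N)$ and $\omega^{2}=-\mu^{2}$ reduces it to $V_{1}=\tfrac{\mu\rho f_{\tp,Q}}{gh}(Ph+Rg)$, $V_{2}=\tfrac{\omega\rho f_{\tp,Q}}{gh}(Ph-Rg)$ for suitable real scalars $P,R$ built from $\alpha,\beta,\xi$, so $\norm{V}_{\mathbb{G}}^{2}=V_{1}^{2}+V_{2}^{2}=\tfrac{4PR\,\rho^{2}f_{\tp,Q}^{2}}{A^{2}+B^{2}}$. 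Finally expand $\divergence_{\mathbb{G}}=Y_{1}(\,\cdot\,)+Y_{2}(\,\cdot\,)$ on $\norm{V}_{\mathbb{G}}^{\tp-2}V$ using $Y_{1}\rho=\tfrac{n}{n+1}\rho^{2}/A$, $Y_{1}(A^{2}+B^{2})=2\rho A$, $Y_{2}(A^{2}+B^{2})=2\rho B$; the parameters $\alpha,\beta$ (equivalently $\xi$, and with it $\mu,\omega$) are calibrated so that every surviving term cancels. The degenerate case $\tp=n+2$, where $\alpha=\beta=0$, is handled identically after replacing $f_{\tp,Q}$ by $\log\bigl(g^{1+\xi}h^{1-\xi}\bigr)$, whose horizontal derivatives have the same shape with $\alpha,\beta$ replaced by $1+\xi,1-\xi$ and the trailing factor $f_{\tp,Q}$ deleted.

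The conceptual steps---spotting the complex subfield and the Cauchy--Riemann structure---are immediate; the work is the cancellation in the last step, which is the only place the exact formulas for $\mu,\omega,\xi,\alpha,\beta$ are used. One must verify that a single algebraic relation among them annihilates the divergence for \emph{every} admissible $\tp$ simultaneously, with no leftover contribution from $Y_{1}\rho$---the term that reflects the nontrivial bracket $[Y_{1},Y_{2}]$ and the absence of a group law on $\mathbb{G}_{n}$---and that the local branch choices and the $\tp=n+2$ specialization remain consistent throughout. Equivalently, one must confirm that the normalization in the definitions of $\mu,\omega,\xi$ is exactly the one matching the Bieske--Childers solution under $\mathbf{u}\mapsto i$. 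Keeping this bookkeeping organized---for instance by factoring $\rho^{\tp-1}f_{\tp,Q}^{\tp-1}(A^{2}+B^{2})^{-\tp/2}$ out at the start and tracking only the bracketed polynomial in $g$ and $h$---is the crux.
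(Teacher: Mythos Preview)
Your observation that $\mathbf{u}=\omega/\mu$ satisfies $\mathbf{u}^{2}=-1$, so that $g$, $h$, $f_{\tp,Q}$, and $Q$ all lie in the commutative subfield $\mathbb{R}\oplus\mathbb{R}\mathbf{u}\cong\mathbb{C}$, is correct and is a genuinely different route from the paper. The paper simply recomputes every derivative in the quaternion setting and checks that the two halves of $\Lambda$ cancel; it never isolates the complex subfield or invokes Theorem~\ref{childersdiv}. Your approach explains \emph{why} the quaternionic case has the same shape as the complex one, whereas the paper's brute-force calculation gives no such insight.

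That said, there is a real gap in your direct sketch. You take $\norm{V}_{\mathbb{G}}^{2}=V_{1}^{2}+V_{2}^{2}$, the bilinear extension, and obtain $\norm{V}_{\mathbb{G}}^{2}=4PR\,\rho^{2}f_{\tp,Q}^{2}/(A^{2}+B^{2})$, which carries the non-real factor $f_{\tp,Q}^{2}=g^{2\alpha}h^{2\beta}$. The paper, by contrast, uses the Hermitian form $\Upsilon_{1}\overline{\Upsilon_{1}}+\Upsilon_{2}\overline{\Upsilon_{2}}$ (note $\overline{g}=h$), yielding the genuinely real quantity
\[
\norm{\Upsilon}^{2}=2\mu^{2}c^{2}(n+1)^{2}(y_{1}-a)^{2n}(gh)^{\alpha+\beta-1}\bigl(\alpha^{2}(1-\xi)^{2}+\beta^{2}(1+\xi)^{2}\bigr).
\]
The two differ by $(g/h)^{\alpha-\beta}$, so your divergence computation is for a different operator than the one in the theorem. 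If you want to salvage the reduction to Theorem~\ref{childersdiv}, you need that theorem's norm to be Hermitian as well and then transport the Hermitian structure through $\mathbf{u}\mapsto i$; the bilinear shortcut does not match the paper's statement. You also flag, but do not carry out, the check that the drift parameter after transport equals the paper's $\xi$: under $\mathbf{u}\mapsto i$ the operator has drift $Q=\mu(L+M+N)\mathbf{u}$, so the effective ``$L$'' is $\mu(L+M+N)$, and you must confirm this agrees with the paper's $\xi=\sqrt{|Q^{2}|}\,(L+M+N)$ for the exponents $\alpha,\beta$ to coincide. Until both points are settled, the argument remains a sketch rather than a proof.
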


\begin{proof}
Suppressing arguments and subscripts, we let:
\begin{equation*}
\Upsilon \defeq \begin{pmatrix}
\Upsilon_1\\
\Upsilon_2
\end{pmatrix} = \begin{pmatrix}
Y_1 f + Q Y_2 f\\
Y_2 f - Q Y_1 f
\end{pmatrix}.
\end{equation*}
Observing that:
\begin{eqnarray*}
\overline{\Delta_\tp} f &=& \divergence\left( \|\Upsilon\|^{\tp-2} \Upsilon \right)\\
&=& \norm{\Upsilon}^{\tp-4} \left( \frac{\tp-2}{2} \sum_{s=1}^2 Y_s \norm{\Upsilon}^2 \Upsilon_s + \norm{\Upsilon}^2 (Y_1 \Upsilon_1 + Y_2 \Upsilon_2) \right)
\end{eqnarray*}
it suffices to show:
\begin{equation*}
\Lambda \defeq \frac{\tp-2}{2} \sum_{s=1}^2 Y_s \norm{\Upsilon}^2 \Upsilon_s + \norm{\Upsilon}^2 (Y_1 \Upsilon_1 + Y_2 \Upsilon_2) = 0.
\end{equation*}
For $\tp \ne n+2$, we compute the following:
\begin{eqnarray*}
Y_1 f &=& \mu c(n+1)(y_1 - a)^n g^{\alpha -1} h^{\beta -1} (\alpha h + \beta g )\\
Y_2 f &=& \omega c(n+1)(y_1-a)^n g^{\alpha -1} h^{\beta -1} \left( \alpha h - \beta g \right)\\
Y_1 f + Q Y_2 f &=& \mu c(n+1) (y_1 - a)^n g^{\alpha -1} h^{\beta -1} \left( \alpha h(1 - \xi) + \beta g (1 + \xi) \right)\\
Y_2 f - Q Y_1 f &=& \omega c(n+1)(y_1-a)^n g^{\alpha -1} h^{\beta -1} \left( \alpha h (1 - \xi) - \beta g (1 + \xi)  \right)\\
\textmd{and\ }\norm{\Upsilon}^2 &=& 2 \mu^2 c^2 (n+1)^2 (y_1 - a)^{2n} g^{\alpha + \beta -1} h^{\alpha + \beta -1}  \left( \alpha^2 (1-\xi)^2 + \beta^2(1 + \xi)^2  \right).
\end{eqnarray*}
We then calculate:
\begin{eqnarray*}
Y_1 \Upsilon_1 + Y_2 \Upsilon_2 &=&  \frac{1}{(-1+\tp)^2 gh} \mu^2 c^2 (- 1 + \xi^2) ( 1+n ) (2 + n - \tp) (-2 + \tp) (y_1-a)^{2n} g^\alpha h^\beta\\
Y_1 \norm{\Upsilon}^2 &=& -\frac{1}{(-1+\tp)^3 gh}  \Big( 2 \mu^2 c^2 (1- \xi^2)^2 (n+1) (n+2-\tp)^2 (y_1-a)^{2n-1}    \times  \\
        && \quad g^{\alpha + \beta -1} h^{\alpha + \beta -1}\left( \mu^2 c^2   (y_1 - a)^{2n+2} - \mu^2 n (n+1) (-1+\tp) (y_2 -b)^2 \right) \Big)\\
\textmd{and\ }Y_2 \norm{\Upsilon}^2 &=& \frac{1}{(-1+\tp)^3 gh}  2 \mu^4 c^3 (1-\xi^2)^2  (n+1) (n+2-\tp)^2 (1+n\tp)  \times \\
&&\quad  (y_1 - a)^{3n} (b-y_2) g^{\alpha + \beta -1} h^{\alpha + \beta-1}. 
\end{eqnarray*}
Using the above quantities we compute:
\begin{eqnarray}
\frac{\tp-2}{2}\sum_{s=1}^2 Y_s \norm{\Upsilon}^2 \Upsilon_s &=& -\frac{1}{(-1+\tp)^4 }  \mu^4 c^4 (-1+ \xi^2)^3 (n+1) (n+2-\tp)^3 \times \label{gpart1}\\ \nonumber
&&\quad (y_1-a)^{4n} g^{2\alpha + \beta-2} h^{\alpha + 2\beta-2}(\tp-2)\\ \nonumber
\textmd{and\ } \norm{\Upsilon}^2 (Y_1 \Upsilon_1 + Y_2 \Upsilon_2) &=& \frac{1}{(-1+\tp)^4 } \mu^4 c^4 (n+1) (y_1 - a)^{4n} g^{2\alpha + \beta -2} h^{\alpha + 2\beta -2} \times\\ \nonumber
&&\quad (n+2-\tp)^3 (-1+\xi^2)^3 (\tp-2)
\end{eqnarray}
whereby it follows that $\Lambda = 0$, as desired. The case $\tp = n+2$ is similar and omitted.
\end{proof}

\subsection{Case II: $\mathbf{L + M + N = 0}$}\mbox{}\\

Let $Q = Li + Mj +Nk \in \mathbb{H} \setminus  \mathbb{R} $ with $L + M + N = 0$. We consider the following parameters:
\begin{eqnarray*}
    \xi &=& \sqrt{2\abs{LM + LN + MN}}\\
    \alpha &=& \frac{n+2-\tp}{(1-\tp)(2n+2)} (1 + \xi)\\
 \textmd{and\ }   \beta &=& \frac{n+2-\tp}{(1-\tp)(2n+2)} (1-\xi)
\end{eqnarray*}
where $\xi \ne \pm 1$. We use these constants with the functions:
\begin{eqnarray*}
	 g(y_1, y_2) &=& \xi c(y_1-a)^{n+1} + Q (n+1)(y_2 - b)\\
    h(y_1, y_2) &=& \xi c(y_1-a)^{n+1} - Q (n+1)(y_2 - b)
\end{eqnarray*}
to define our main function:
\begin{equation}\label{ChildersCoreB}
f_{\tp, Q} (y_1, y_2) = \left\{\begin{array}{cc}
g(y_1,y_2)^{\alpha} h(y_1,y_2)^{\beta} & \tp \ne n+2 \\
\log\big(g(y_1, y_2)^{1+\xi} h(y_1, y_2)^{1-\xi}\big) & \tp = n+2.
\end{array}\right.
\end{equation}
Using Equation \ref{ChildersCoreB}, we have the following theorem.
\begin{theorem}\label{ChildersClaimB}
Let $Q = Li + Mj +Nk \in \mathbb{H} \setminus \mathbb{R}$ with $L + M + N = 0$. On $G_n \setminus \lbrace (a,b) \rbrace$,\\ we have:
\begin{equation*}
\overline{\Delta_\tp} f_{\tp, Q} \defeq \divergence_\mathbb{G} \left( \begin{Vmatrix}
Y_1 f_{\tp, Q} + Q Y_2 f_{\tp, Q}\\
Y_2 f_{\tp, Q} - Q Y_1 f_{\tp, Q}
\end{Vmatrix}^{\tp-2}_\mathbb{G} \begin{pmatrix}
Y_1 f_{\tp, Q} + Q Y_2 f_{\tp, Q}\\
Y_2 f_{\tp, Q} - Q Y_1 f_{\tp, Q}
\end{pmatrix}  \right) = 0.
\end{equation*}
\end{theorem}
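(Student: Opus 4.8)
The plan is to repeat, with only cosmetic changes, the computation that proved Theorem~\ref{ChildersClaimA}; the single genuinely new ingredient is a one-line algebraic identity that takes over the role played there by the hypothesis $L+M+N\ne 0$. First I would unpack $L+M+N=0$: squaring gives $L^2+M^2+N^2=-2(LM+LN+MN)$, and since $Q\in\mathbb{H}\setminus\mathbb{R}$ forces $L^2+M^2+N^2>0$, necessarily $LM+LN+MN<0$, so that
\begin{equation*}
\xi^2=2\abs{LM+LN+MN}=L^2+M^2+N^2=\abs{Q^2},\qquad\text{i.e.}\qquad Q^2=-\xi^2.
\end{equation*}
Hence $\iota\defeq Q/\xi$ is a purely imaginary unit quaternion with $\iota^2=-1$; writing $A\defeq c(y_1-a)^{n+1}$ and $B\defeq (n+1)(y_2-b)$ we get $g=\xi(A+\iota B)$ and $h=\xi(A-\iota B)=\bar g$, so $g$, $h$ and all of their real powers lie in the commutative subfield $\mathbb{C}_\iota\defeq\mathbb{R}+\mathbb{R}\iota\cong\mathbb{C}$, with $gh=\xi^2(A^2+B^2)=\xi^2\big(c^2(y_1-a)^{2n+2}+(n+1)^2(y_2-b)^2\big)$ a strictly positive, real-valued function on $\mathbb{G}_n\setminus\{(a,b)\}$; in particular $g,h$ do not vanish there and $f_{\tp,Q}$ is defined.

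Next I would set up the problem exactly as in Theorem~\ref{ChildersClaimA}: put $\Upsilon=(\Upsilon_1,\Upsilon_2)=(Y_1f+QY_2f,\ Y_2f-QY_1f)$, note
\begin{equation*}
\overline{\Delta_\tp}f=\norm{\Upsilon}^{\tp-4}\Big(\tfrac{\tp-2}{2}\sum_{s=1}^2 Y_s\norm{\Upsilon}^2\,\Upsilon_s+\norm{\Upsilon}^2\big(Y_1\Upsilon_1+Y_2\Upsilon_2\big)\Big),
\end{equation*}
and reduce the theorem (for $\tp\ne n+2$) to showing $\Lambda\defeq\tfrac{\tp-2}{2}\sum_s Y_s\norm{\Upsilon}^2\Upsilon_s+\norm{\Upsilon}^2(Y_1\Upsilon_1+Y_2\Upsilon_2)=0$. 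The chain rule gives $Y_1f=\xi c(n+1)(y_1-a)^n g^{\alpha-1}h^{\beta-1}(\alpha h+\beta g)$ and $Y_2f=Qc(n+1)(y_1-a)^n g^{\alpha-1}h^{\beta-1}(\alpha h-\beta g)$, and here is where $L+M+N=0$ enters: in $\Upsilon_1$ the term $QY_2f$ carries a factor $Q\cdot Q=Q^2=-\xi^2$, while in $\Upsilon_2\overline{\Upsilon_2}$ one obtains $-Q^2=\xi^2$ (using $\bar Q=-Q$ and that the remaining factors lie in $\mathbb{C}_\iota$). One is thus left with the Case~I expressions with $\mu$ replaced by $\xi$ and $\omega$ replaced by $Q$:
\begin{align*}
\Upsilon_1&=\xi c(n+1)(y_1-a)^n g^{\alpha-1}h^{\beta-1}\big(\alpha h(1-\xi)+\beta g(1+\xi)\big),\\
\Upsilon_2&=Qc(n+1)(y_1-a)^n g^{\alpha-1}h^{\beta-1}\big(\alpha h(1-\xi)-\beta g(1+\xi)\big),\\
\norm{\Upsilon}^2&=2\xi^2c^2(n+1)^2(y_1-a)^{2n}(gh)^{\alpha+\beta-1}\big(\alpha^2(1-\xi)^2+\beta^2(1+\xi)^2\big).
\end{align*}

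From here the argument is a verbatim transcription of the proof of Theorem~\ref{ChildersClaimA}. I would differentiate the three displayed quantities to obtain $Y_1\Upsilon_1+Y_2\Upsilon_2$, $Y_1\norm{\Upsilon}^2$ and $Y_2\norm{\Upsilon}^2$ (using $Y_1(gh)=(2n+2)\xi^2c^2(y_1-a)^{2n+1}$, $Y_2(gh)=2\xi^2c(n+1)^2(y_1-a)^n(y_2-b)$, and $\alpha+\beta-1=\tfrac{1+n\tp}{(1-\tp)(n+1)}$), then assemble the two pieces of $\Lambda$; each collapses to
\begin{equation*}
\pm\,\frac{\xi^4c^4(n+1)(n+2-\tp)^3(\xi^2-1)^3(\tp-2)}{(\tp-1)^4}\,(y_1-a)^{4n}\,g^{2\alpha+\beta-2}h^{\alpha+2\beta-2},
\end{equation*}
with opposite signs, whence $\Lambda=0$ and $\overline{\Delta_\tp}f_{\tp,Q}=0$ on $\mathbb{G}_n\setminus\{(a,b)\}$. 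The case $\tp=n+2$, with $f_{\tp,Q}=\log\big(g^{1+\xi}h^{1-\xi}\big)$, is handled identically using $Y_s\log g=Y_sg/g$ and $Y_s\log h=Y_sh/h$, and I would note it is similar and omit it, as is done for Theorem~\ref{ChildersClaimA}.

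The only step that is not pure bookkeeping is the identity $\xi^2=\abs{Q^2}$ in the first paragraph: the definition of $\xi$ through $\abs{LM+LN+MN}$ rather than $LM+LN+MN$ is precisely compensated by the fact that $L+M+N=0$ together with $Q\notin\mathbb{R}$ forces $LM+LN+MN<0$, so the absolute value resolves with the sign that makes $Q$ itself a square root of $-\xi^2$ and hence places $g,h$ in a commutative copy of $\mathbb{C}$ inside $\mathbb{H}$. Once that is in hand the non-commutativity of $\mathbb{H}$ never actually intervenes — every product appearing along the way either lies in $\mathbb{C}_\iota$ or is a real scalar — so the remaining computation is identical in form to that of Theorem~\ref{ChildersClaimA} under $(\mu,\omega)\mapsto(\xi,Q)$. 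Equivalently, and perhaps more transparently, since $Q=\xi\iota$ with $\iota^2=-1$ one may simply note that this case is Theorem~\ref{childersdiv} with the complex unit $i$ replaced by $\iota$ and the real parameter $L$ replaced by $\xi$ (the overall constant $\xi^{\alpha+\beta}$ multiplying $f_{\tp,Q}$ being harmless, as $\overline{\Delta_\tp}(cf)=c^{\tp-1}\,\overline{\Delta_\tp}f$ for $c>0$), whose purely algebraic proof transfers intact to the subfield $\mathbb{R}+\mathbb{R}\iota\subset\mathbb{H}$.
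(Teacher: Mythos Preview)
Your proposal is correct and follows the paper's approach exactly: the paper's proof merely states that it is similar to that of Theorem~\ref{ChildersClaimA} and is left to the reader, and you carry out precisely that parallel computation under the substitution $(\mu,\omega)\mapsto(\xi,Q)$. Your explicit isolation of the governing identity $Q^2=-\xi^2$ (whence $g,h$ and all derived quantities lie in the commutative subfield $\mathbb{R}+\mathbb{R}(Q/\xi)\cong\mathbb{C}$) and the alternative reduction to Theorem~\ref{childersdiv} via $i\mapsto Q/\xi$, $L\mapsto\xi$ actually supply more justification than the paper itself provides.
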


\begin{proof}
The proof of Theorem \ref{ChildersClaimB} is similar to that of Theorem \ref{ChildersClaimA} and left to the reader.
\end{proof}

We then conclude the following corollary.
\begin{corollary}\label{childerssmooth}
Let $\tp>n+2$. The function $f_{\tp,Q}$, as above, is a nontrivial smooth solution to the Dirichlet problem
\begin{eqnarray*}
\left\{\begin{array}{cc}
\overline{\Delta_{\tp}}f_{\tp, Q}(\mathbf{y})=0 & \mathbf{y} \in \mathbb{G}_n\setminus\{(a,b)\} \\
0 & \mathbf{y} = (a,b).
\end{array}\right.
\end{eqnarray*}
\end{corollary}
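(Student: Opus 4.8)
The plan is to read the three assertions of the corollary — that $f_{\tp,Q}$ solves the equation away from $(a,b)$, is smooth there, and extends continuously by the value $0$ at $(a,b)$ — off the results already in hand, the only genuine work being a passage through the commutative subalgebra generated by $Q$ and a careful look at the sign of the exponent $\tau_\tp$. The equation itself needs nothing new: on $\mathbb{G}_n\setminus\{(a,b)\}$ the identity $\overline{\Delta_\tp}f_{\tp,Q}=0$ is precisely Theorem~\ref{ChildersClaimA} when $L+M+N\ne0$ and Theorem~\ref{ChildersClaimB} when $L+M+N=0$.

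For smoothness, I would first record that since $Q=Li+Mj+Nk\notin\mathbb{R}$ one has $Q^2=-(L^2+M^2+N^2)<0$, so $\mathbb{R}[Q]$ is a field isomorphic to $\mathbb{C}$, and that the scalars entering $g$ and $h$ (namely $\mu$ and $\omega$ in Case~I, and $Q$ itself in Case~II) all lie in $\mathbb{R}[Q]$; hence $g$ and $h$ are polynomial — and in particular commuting — maps into this copy of $\mathbb{C}$, with $h=\bar g$. Splitting $g$ into its real and $Q$-imaginary components shows that $g$ vanishes only where $(y_1-a)^{n+1}=0$ and $y_2-b=0$, i.e.\ only at $(a,b)$ (here one uses $c\ne0$ and $\mu\ne0$, resp.\ $\xi\ne0$), and the same holds for $h$. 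Since $\tp>n+2$ we are always in the $g^\alpha h^\beta$ branch of the definition, so $f_{\tp,Q}$ is a product of real powers of nowhere-vanishing smooth $\mathbb{R}[Q]$-valued functions and is therefore smooth on $\mathbb{G}_n\setminus\{(a,b)\}$; nontriviality will follow from the magnitude computation below.

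For the boundary behavior I would exploit the collapse of $gh$ to the model function of Theorem~\ref{T6}: using $\omega^2=-\mu^2$ in Case~I, and $L^2+M^2+N^2=2|LM+LN+MN|$ (a consequence of $L+M+N=0$) in Case~II, one gets $gh=\kappa\big(c^2(y_1-a)^{2n+2}+(n+1)^2(y_2-b)^2\big)=\kappa f$ for a constant $\kappa>0$, and hence $|g|=|h|=\sqrt{\kappa}\,\sqrt f$. Because $g$ and $h$ commute, $|f_{\tp,Q}|=|g|^\alpha|h|^\beta=\kappa^{\tau_\tp}f^{\tau_\tp}$, where $\alpha+\beta=2\tau_\tp$ and $\tau_\tp=\frac{n+2-\tp}{(2n+2)(1-\tp)}$. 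When $\tp>n+2$ both $n+2-\tp$ and $(2n+2)(1-\tp)$ are negative, so $\tau_\tp>0$; therefore $|f_{\tp,Q}|\to0$ as $(y_1,y_2)\to(a,b)$, and setting $f_{\tp,Q}(a,b)=0$ gives a continuous extension satisfying the stated Dirichlet problem, while $f^{\tau_\tp}$ being nonconstant with $\tau_\tp\ne0$ shows $f_{\tp,Q}\not\equiv0$.

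The point I expect to require the most care is the single-valuedness of $g^\alpha h^\beta$: when $n+1$ is odd the map $(y_1,y_2)\mapsto g$ winds around $0\in\mathbb{R}[Q]$ along a loop encircling $(a,b)$, so $g^\alpha h^\beta$ is a priori only locally defined. I would dispose of this by factoring $f_{\tp,Q}=(gh)^{\tau_\tp}(g/h)^{\tau_\tp\xi}$, noting that $(gh)^{\tau_\tp}=\kappa^{\tau_\tp}f^{\tau_\tp}$ is an honest global smooth real function and that the unimodular factor $(g/h)^{\tau_\tp\xi}$ is smooth and single-valued on any simply connected subdomain of $\mathbb{G}_n\setminus\{(a,b)\}$ — which is all that the local conclusions "smooth solution" and "attains the boundary value $0$" require, the magnitude estimate above being branch-independent.
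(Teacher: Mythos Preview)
Your argument is correct and considerably more detailed than the paper's own treatment: the paper gives no proof at all, simply stating the corollary as an immediate consequence of Theorems~\ref{ChildersClaimA} and~\ref{ChildersClaimB}. The equation on $\mathbb{G}_n\setminus\{(a,b)\}$ is indeed just those theorems, and your computations of $Q^2$, $\omega^2=-\mu^2$, $gh=\kappa\big(c^2(y_1-a)^{2n+2}+(n+1)^2(y_2-b)^2\big)$, and $\alpha+\beta=2\tau_\tp>0$ for $\tp>n+2$ are all correct and yield the boundary limit and nontriviality as you claim.

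Where you genuinely go beyond the paper is in isolating the commutative subalgebra $\mathbb{R}[Q]\cong\mathbb{C}$ to make sense of powers and conjugation, and in raising the branch/single-valuedness issue for $g^\alpha h^\beta$. The paper is silent on both points. Your factorization $f_{\tp,Q}=(gh)^{\tau_\tp}(g/h)^{\tau_\tp\xi}$ cleanly separates the globally defined real part from the potentially multivalued unimodular factor; note, however, that your resolution (``local conclusions suffice'') sits somewhat uneasily with the global Dirichlet problem as literally stated. This is not a gap you have introduced---the paper itself does not confront it---but if you wanted to close it fully you would either restrict to a simply connected domain, or observe that the equation $\overline{\Delta_\tp}f=0$ and the boundary condition are both invariant under multiplication by a unimodular constant in $\mathbb{R}[Q]$, so any branch works.
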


\section{A Generalization of the Drift Term over $\mathbb{H}$}

\subsection{Case I: $\mathbf{L + M + N \ne 0}$}\mbox{}\\

Let $Q = Li + Mj +Nk \in \mathbb{H} \setminus \mathbb{R} $ with $L + M + N \ne 0$. We consider the following parameters:
\begin{eqnarray*}
	\mu &=& \frac{\sqrt{\abs{Q^2}}}{\abs{L + M + N}}\\
	\omega &=& \frac{Q}{L + M + N}\\
    \xi &=& \sqrt{\abs{Q^2}} (L + M + N)\\
    \alpha &=&  \frac{n+2-\tp - \xi n(1- \tp)}{2(n+1)(1-\tp)}\\
 \textmd{and\ }   \beta &=&  \frac{n+2-\tp + \xi n(1- \tp)}{2(n+1)(1-\tp)}
\end{eqnarray*}
where:
\begin{equation*}
\xi \ne \pm \frac{n+2-\tp}{n(\tp-1)} .
\end{equation*}
We use these constants with the functions:
\begin{eqnarray*}
	 g(y_1, y_2) &=& \mu c(y_1-a)^{n+1} + \omega (n+1)(y_2 - b)\\
    h(y_1, y_2) &=& \mu c(y_1-a)^{n+1} - \omega (n+1)(y_2 - b)
\end{eqnarray*}
to define our main function:
\begin{equation}\label{GrushinCoreA}
f_{\tp, Q} (y_1, y_2) = g(y_1, y_2)^\alpha h(y_1, y_2)^\beta.
\end{equation}
Using Equation \ref{GrushinCoreA}, we have the following theorem.

\begin{theorem}\label{GrushinClaimA}
Let $Q = Li + Mj +Nk \in \mathbb{H} \setminus \mathbb{R}$ with $L + M + N \ne 0$. On $G_n \setminus \lbrace (a,b) \rbrace$, \\ we have:
\begin{equation*}
\mathcal{G}_{\tp, Q} \left( f_{\tp, Q} \right) \defeq \Delta_{\tp} f_{\tp, Q} + Q \left[ Y_1, Y_2 \right] \left( \norm{\nabla_0 f_{\tp, Q}}^{\tp -2}_\mathbb{G} f_{\tp, Q}  \right) = 0.
\end{equation*}
\end{theorem}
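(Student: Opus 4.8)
The plan is to mimic the computational strategy that succeeds for Theorem~\ref{ChildersClaimA}, adapting it to the drift-term operator $\mathcal{G}_{\tp,Q}$ rather than the divergence-form operator $\overline{\Delta_\tp}$. First I would record the basic first-order data for $f = f_{\tp,Q} = g^\alpha h^\beta$: namely $Y_1 f = \mu c(n+1)(y_1-a)^n g^{\alpha-1}h^{\beta-1}(\alpha h + \beta g)$ and $Y_2 f = \omega c(n+1)(y_1-a)^n g^{\alpha-1}h^{\beta-1}(\alpha h - \beta g)$, together with $\norm{\nabla_0 f}_\mathbb{G}^2 = (Y_1 f)^2 + (Y_2 f)^2$. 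A key algebraic simplification, exactly as in the complex case of \cite{BB}, is that the cross terms collapse because of the identity $\mu^2 + \omega^2$-type cancellations coming from $\omega^2 = Q^2/(L+M+N)^2$ and $\mu^2 = \abs{Q^2}/(L+M+N)^2$, so that $\norm{\nabla_0 f}_\mathbb{G}^2$ factors as a constant multiple of $(y_1-a)^{2n} g^{\alpha+\beta-2}h^{\alpha+\beta-2}$ times a quadratic expression in $g,h$; the specific choice of $\alpha,\beta$ is what makes that quadratic degenerate into a perfect product $gh$ up to scalars.

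Next I would expand $\mathcal{G}_{\tp,Q}(f)$ into its two pieces. Using the reduction formula~\eqref{reductiong} for $\Delta_\tp f$, and using $[Y_1,Y_2] = cn(y_1-a)^{n-1}\partial_{y_2}$, I would write
\begin{equation*}
\mathcal{G}_{\tp,Q}(f) = \tfrac{1}{2}(\tp-2)\norm{\nabla_0 f}_\mathbb{G}^{\tp-4}\bigl(Y_1\norm{\nabla_0 f}_\mathbb{G}^2 Y_1 f + Y_2\norm{\nabla_0 f}_\mathbb{G}^2 Y_2 f\bigr) + \norm{\nabla_0 f}_\mathbb{G}^{\tp-2}(Y_1 Y_1 f + Y_2 Y_2 f) + Q\,cn(y_1-a)^{n-1}\partial_{y_2}\bigl(\norm{\nabla_0 f}_\mathbb{G}^{\tp-2} f\bigr).
\end{equation*}
After pulling out the common factor $\norm{\nabla_0 f}_\mathbb{G}^{\tp-4}$, I want to show the bracketed quantity $\Lambda$ — an explicit rational function in $(y_1-a)$, $(y_2-b)$, $g$, $h$ with coefficients polynomial in $\tp,n,\xi,\mu,c$ — vanishes identically. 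The drift term contributes $Q\,cn(y_1-a)^{n-1}\bigl[(\tp-2)\norm{\nabla_0 f}_\mathbb{G}^{\tp-4}(\partial_{y_2}\norm{\nabla_0 f}_\mathbb{G}^2)\tfrac{1}{2}\cdot 2 f + \norm{\nabla_0 f}_\mathbb{G}^{\tp-2}\partial_{y_2} f\bigr]$, so one must track how the factor $Q$ recombines with $\omega = Q/(L+M+N)$ and $\mu$; this is precisely where the substitution $\xi = \sqrt{\abs{Q^2}}(L+M+N)$ is engineered so that the stray $Q$'s become the scalar $\xi$. I expect the cleanest route is to compute $\partial_{y_2} f$, $\partial_{y_2}\norm{\nabla_0 f}_\mathbb{G}^2$, $Y_1 Y_1 f + Y_2 Y_2 f$, and $Y_1\norm{\nabla_0 f}_\mathbb{G}^2 Y_1 f + Y_2\norm{\nabla_0 f}_\mathbb{G}^2 Y_2 f$ each as a scalar multiple of $(y_1-a)^{4n}$ (or $(y_1-a)^{4n-? }$) times $g^{\bullet}h^{\bullet}$ with matching exponents, and then verify the scalar coefficients sum to zero.

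The main obstacle I anticipate is bookkeeping rather than conceptual: making sure the quaternionic coefficient $Q$ only ever appears through the real scalar $\xi$ (so that the final coefficient identity is an honest identity in $\mathbb{R}(\tp,n,\xi)$ and not a false identity that secretly uses $Q^2 = -\abs{Q^2}$ in a disallowed way), and confirming that the two half-integer powers of $g$ and $h$ produced by $\norm{\nabla_0 f}_\mathbb{G}^{\tp-4}$ versus $\norm{\nabla_0 f}_\mathbb{G}^{\tp-2}$ line up so that every term carries the same monomial $g^{2\alpha+\beta-2}h^{\alpha+2\beta-2}$ (or equivalent). Once all three groups of terms are expressed against that common monomial, the claim reduces to checking that a polynomial in $\tp,n,\xi$ — built from the three contributions, with the drift contribution carrying the extra factor coming from $cn(y_1-a)^{n-1}$ against $(y_1-a)^{3n}(y_2-b)$ and against the $(y_1-a)^{n+1}$ pieces inside $g,h$ — is identically zero; this is exactly the role played by the defining formulas for $\alpha$ and $\beta$ and the excluded values $\xi \ne \pm\frac{n+2-\tp}{n(\tp-1)}$ (which guarantee $f$ is nonconstant and the division by $gh$ is legitimate). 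As in Theorem~\ref{ChildersClaimA}, I would present the four or five displayed intermediate computations and then simply observe that the coefficients cancel, deferring the fully expanded algebra.
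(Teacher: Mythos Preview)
Your approach is essentially the paper's own: compute $Y_1 f$, $Y_2 f$, $\|\nabla_0 f\|^2$, then the second-order quantities, assemble $\Delta_\tp f$ via the reduction formula~\eqref{reductiong}, compute the drift term, and observe the cancellation. The paper's only organizational difference is that it reduces $\Delta_\tp f$ and $Q[Y_1,Y_2](\|\nabla_0 f\|^{\tp-2}f)$ each to a single closed-form expression and then notes the second equals $-\Delta_\tp f$, rather than collecting everything against a common monomial first.

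Two points of caution in your write-up. First, in the quaternionic setting $\|\nabla_0 f\|_\mathbb{G}^2$ is \emph{not} $(Y_1 f)^2 + (Y_2 f)^2$ but $Y_1 f\,\overline{Y_1 f} + Y_2 f\,\overline{Y_2 f}$; the paper explicitly records $\overline{Y_1 f}$ and $\overline{Y_2 f}$ (using $\overline{g}=h$, $\overline{\omega}=-\omega$), and it is this pairing that turns the $\omega$ prefactor on $Y_2 f$ into the real scalar $\mu^2$ and makes the cross terms cancel, yielding $\|\nabla_0 f\|^2 = 2\mu^2 c^2(n+1)^2(y_1-a)^{2n}(gh)^{\alpha+\beta-1}(\alpha^2+\beta^2)$. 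That collapse is structural (cross terms cancel for any $\alpha,\beta$), not a consequence of the particular exponents you chose; the role of the specific $\alpha,\beta$ appears only at the final cancellation between $\Delta_\tp f$ and the drift term. Second, once $\|\nabla_0 f\|^{\tp-2}$ is raised to a $\tp$-dependent power, the common $g,h$ exponents are $\tp$-dependent (the paper gets $g^{(\alpha\tp+\beta(\tp-2)-\tp)/2}h^{(\alpha(\tp-2)+\beta\tp-\tp)/2}$), not the fixed $g^{2\alpha+\beta-2}h^{\alpha+2\beta-2}$ you quote from the Theorem~\ref{ChildersClaimA} computation.
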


\begin{proof}
Suppressing arguments and subscripts, we compute the following:
\begin{eqnarray}
Y_1 f &=& \mu c (n+1)(y_1 - a)^n g^{\alpha-1} h^{\beta-1} (\alpha h + \beta g) \label{Y1A}\\ \nonumber
\overline{Y_1 f} &=& \mu c (n+1)(y_1 - a)^n g^{\beta -1} h^{\alpha -1} (\alpha g + \beta h)\\
Y_2 f &=& \omega c (n+1) (y_1 - a)^n g^{\alpha-1} h^{\beta-1} (\alpha h - \beta g) \label{Y2A}\\ \nonumber
\overline{Y_2 f} &=& -\omega c (n+1) (y_1 - a)^n g^{\beta -1} h^{\alpha -1} (\alpha g - \beta h)\\ \nonumber
\textmd{and\ }\norm{\nabla_0 f}^2 &=& 2 \mu^2 c^2 (n+1)^2 (y_1 - a)^{2n} g^{\alpha+ \beta -1} h^{\alpha + \beta-1} \left( \alpha^2  + \beta^2  \right).
\end{eqnarray}

Using the above, we compute:
\begin{eqnarray}
Y_1 Y_1 f &=& \mu c(n+1) (y_1 - a)^{n-1} g^{\alpha-2} h^{\beta-2} \times \nonumber\\
    &&\quad \Big( ngh (\alpha h + \beta g) + \mu c (n+1)(y_1 - a)^{n+1} \times \nonumber \\
    &&\quad \big((\alpha h + \beta g)\big((\alpha-1)h + (\beta-1)g\big)  + gh(\alpha + \beta)\big)\Big) \nonumber \\
Y_2 Y_2 f &=& -\mu^2 c^2 (n+1)^2 (y_1 - a)^{2n} g^{\alpha-2} h^{\beta-2} \times \nonumber \\
    &&\quad\big( (\alpha h -\beta g)\big( (\alpha-1) h - (\beta-1) g \big) - gh(\alpha + \beta) \big) \nonumber \\
Y_1 \norm{\nabla_0 f}^2 &=& 4 \mu^2 c^2 (n+1)^2 (y_1 - a)^{2n-1} g^{\alpha + \beta -2} h^{\alpha + \beta-2} (\alpha^2  + \beta^2) \times \label{Y1normsqA}\\ \nonumber
    &&\quad \big(  ngh  +   \mu^2 c^2 (n+1) (y_1 - a)^{2n+2}(\alpha + \beta - 1) \big) \\
Y_2 \norm{\nabla_0 f}^2 &=& -4 \omega^2 \mu^2 c^3 (n+1)^4 (y_1 - a)^{3n}  (y_2 - b) g^{\alpha + \beta -2} h^{\alpha + \beta -2} \times \label{Y2normsqA} \\ \nonumber
&&\quad  \big( \alpha^2  + \beta^2  \big)  (\alpha + \beta -1) 
\end{eqnarray}
and
\begin{eqnarray*}
\sum_{s=1}^2 Y_s \norm{\nabla_0 f}^2 (Y_s f) &=& 4 \mu^3 c^3 (n+1)^3 (y_1 - a)^{3n-1} g^{2\alpha + \beta -3} h^{\alpha + 2\beta-3} (\alpha^2  + \beta^2) \times\\
    &&\quad \big( (\alpha h + \beta g) \big(  ngh  +   \mu^2 c^2 (n+1) (y_1 - a)^{2n+2}(\alpha + \beta - 1) \big)\\
    && \quad + \omega \mu c (n+1)^2 (y_1 - a)^{n+1} (y_2 - b) (\alpha + \beta - 1)(\alpha h - \beta g) \big)\\
\norm{\nabla_0 f}^2 (Y_1Y_1 + Y_2 Y_2 f) &=& 2 \mu^3 c^3 (n+1)^3 (y_1 - a)^{3n-1} g^{2\alpha+ \beta -3} h^{\alpha + 2\beta-3} \times\\
    &&\quad  \big( \alpha^2  + \beta^2  \big) \big( ngh (\alpha h + \beta g) + 4 \mu c (n+1)(y_1 - a)^{n+1}gh \alpha \beta \big)
\end{eqnarray*}
so that
\begin{eqnarray*}
\Delta_\tp f &=& \norm{\nabla_0 f}^{\tp-4} \left( \frac{(\tp-2)}{2} \sum_{s=1}^2 Y_s\|\nabla_0 f\|^2 (Y_s f) + \norm{\nabla_0 f}^2 (Y_1 Y_1 f + Y_2 Y_2 f) \right)\\
& = & -\xi 2^{\frac{\tp-2}{2}} \mu^{\tp-1} c^{\tp-1} n^2 (n+1)^{\tp-2} (y_1 - a)^{n(\tp-1)-1} g^{\frac{\alpha \tp + \beta(\tp-2) - \tp}{2}} h^{\frac{\alpha(\tp-2) + \beta \tp - \tp}{2}} \left( \alpha^2  + \beta^2 \right)^{\frac{\tp-2}{2}}\\
    && \quad \times   \big(  \xi \mu c (y_1 - a)^{n+1}    + \omega (1-\tp) (n+1) (y_2 - b) \big).
\end{eqnarray*}
We then compute:
\begin{eqnarray*}
Q[Y_1, Y_2]\left( \norm{\nabla_0 f}^{\tp-2} f \right) &=&  Q 2^{\frac{p-2}{2}} \mu^{p-2} c^{p-1} n (n+1)^{p-2} (y_1 - a)^{n(p-1)-1} \left( \alpha^2  + \beta^2 \right)^{\frac{p-2}{2}}\\
&&\quad \times \dfrac{\partial}{\partial y_2} \left( g^{\frac{\alpha \tp + \beta(\tp-2) - (\tp-2)}{2}} h^{\frac{\alpha(\tp-2) + \beta \tp - (\tp-2)}{2}} \right)\\
&=& \xi 2^{\frac{\tp-2}{2}} \mu^{\tp-1} c^{\tp-1} n^2 (n+1)^{\tp-2} (y_1 - a)^{n(\tp-1)-1} g^{\frac{\alpha \tp + \beta(\tp-2) - \tp}{2}} h^{\frac{\alpha(\tp-2) + \beta \tp - \tp}{2}} \\
    && \quad \times \left( \alpha^2  + \beta^2 \right)^{\frac{\tp-2}{2}}  \big(  \xi \mu c (y_1 - a)^{n+1}    + \omega (1-\tp) (n+1) (y_2 - b) \big)\\
    &=& - \Delta_\tp f.
\end{eqnarray*}
\end{proof}

\subsection{Case II: $\mathbf{L + M + N = 0}$}\mbox{}\\

Let $Q = Li + Mj +Nk \in \mathbb{H} \setminus  \mathbb{R}$ with $L + M + N = 0$. We consider the following parameters:

\begin{eqnarray*}
    \xi &=& \sqrt{2\abs{LM + LN + MN}}\\
    \alpha &=&  \frac{n+2-\tp - \xi n(1- \tp)}{2(n+1)(1-\tp)}\\
 \textmd{and\ }   \beta &=&  \frac{n+2-\tp + \xi n(1- \tp)}{2(n+1)(1-\tp)}
\end{eqnarray*}
where:
\begin{equation*}
\xi \ne \pm \frac{n+2-\tp}{n(\tp-1)} .
\end{equation*}
We use these constants with the functions:
\begin{eqnarray*}
	 g(y_1, y_2) &=& \xi c(y_1-a)^{n+1} + Q (n+1)(y_2 - b)\\
    h(y_1, y_2) &=& \xi c(y_1-a)^{n+1} - Q (n+1)(y_2 - b)
\end{eqnarray*}
to define our main function:
\begin{equation}\label{GrushinCoreB}
f_{\tp, Q} (y_1, y_2) = g(y_1, y_2)^\alpha h(y_1, y_2)^\beta.
\end{equation}
Using Equation \ref{GrushinCoreB}, we have the following theorem.

\begin{theorem}\label{GrushinClaimB}
Let $Q = Li + Mj +Nk \in \mathbb{H} \setminus \mathbb{R}$ with $L + M + N = 0$. On $G_n \setminus \lbrace (a,b) \rbrace$, \\ we have:
\begin{equation*}
\mathcal{G}_{\tp, Q} \left( f_{\tp, Q} \right) \defeq \Delta_{\tp} f_{\tp, Q} + Q \left[ Y_1, Y_2 \right] \left( \norm{\nabla_0 f_{\tp, Q}}^{\tp -2}_\mathbb{G} f_{\tp, Q}  \right) = 0.
\end{equation*}
\end{theorem}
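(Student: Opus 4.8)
The plan is to reduce Theorem~\ref{GrushinClaimB} to Theorem~\ref{BBpdrift}: when $L+M+N=0$ the quaternion $Q$ generates a copy of $\mathbb{C}$ inside $\mathbb{H}$ in which the computation of \cite{BB} already takes place. First, since $Q=Li+Mj+Nk$ is purely imaginary, $Q^{2}=-(L^{2}+M^{2}+N^{2})$, and $L+M+N=0$ forces $L^{2}+M^{2}+N^{2}=-2(LM+LN+MN)$; as $Q\notin\mathbb{R}$ this common value is strictly positive, so
\[
\xi=\sqrt{2\abs{LM+LN+MN}}=\sqrt{L^{2}+M^{2}+N^{2}}=\abs{Q}, \qquad Q^{2}=-\xi^{2}.
\]
Therefore $\widehat Q:=Q/\xi$ satisfies $\widehat{Q}^{2}=-1$, the subring $\mathbb{R}\oplus\mathbb{R}\widehat Q\subset\mathbb{H}$ is commutative, and $\phi\colon\mathbb{R}\oplus\mathbb{R}\widehat Q\to\mathbb{C}$, $\phi(a+b\widehat Q)=a+bi$, is at once a ring isomorphism, an isometry for the quaternion modulus, and a real-linear diffeomorphism commuting with $Y_{1},Y_{2}$ (which act only on the $(y_{1},y_{2})$-variables, the coefficients being constants).

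Next I would transcribe the data. One has $g=\xi\bigl(c(y_{1}-a)^{n+1}+\widehat Q(n+1)(y_{2}-b)\bigr)=\xi\,g_{\sharp}$ and $h=\xi\,h_{\sharp}$, where $g_{\sharp},h_{\sharp}\in\mathbb{R}\oplus\mathbb{R}\widehat Q$ are mutually conjugate with common modulus-squared $c^{2}(y_{1}-a)^{2n+2}+(n+1)^{2}(y_{2}-b)^{2}>0$ on $\mathbb{G}_{n}\setminus\{(a,b)\}$; hence $g_{\sharp}^{\alpha},h_{\sharp}^{\beta}$ are well defined and smooth there (using, through $\phi$, the branch convention of \cite{BB}) and $f_{\tp,Q}=g^{\alpha}h^{\beta}=\xi^{\alpha+\beta}g_{\sharp}^{\alpha}h_{\sharp}^{\beta}$. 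Writing $\mathcal{G}^{\mathrm{BB}}_{\tp,L}$ and $f^{\mathrm{BB}}_{\tp,L}$ for the operator and function of Theorem~\ref{BBpdrift}, one checks that $\phi(f_{\tp,Q})=\xi^{\alpha+\beta}f^{\mathrm{BB}}_{\tp,\xi}$: indeed $\phi(g_{\sharp})=c(y_{1}-a)^{n+1}+i(n+1)(y_{2}-b)$ and $\phi(h_{\sharp})$ is its conjugate, while the parameters $\alpha,\beta$ above are, verbatim, those of Theorem~\ref{BBpdrift} with $L$ replaced by $\xi$, the excluded set $\xi\ne\pm\tfrac{n+2-\tp}{n(\tp-1)}$ being exactly the admissibility condition $L\ne\pm\tfrac{n+2-\tp}{n(1-\tp)}$ there.

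It then remains to see that $\phi$ intertwines the operators. Because $\phi$ commutes with $Y_{1},Y_{2}$, is multiplicative, satisfies $\norm{\nabla_{0}u}=\norm{\nabla_{0}(\phi\circ u)}$, and sends $Q=\xi\widehat Q$ to $\xi i$, one obtains
\[
\phi\bigl(\mathcal{G}_{\tp,Q}(u)\bigr)=\Delta_{\tp}(\phi\circ u)+\xi i\,[Y_{1},Y_{2}]\bigl(\norm{\nabla_{0}(\phi\circ u)}^{\tp-2}(\phi\circ u)\bigr)=\mathcal{G}^{\mathrm{BB}}_{\tp,\xi}(\phi\circ u)
\]
for every smooth $(\mathbb{R}\oplus\mathbb{R}\widehat Q)$-valued $u$. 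Since $\mathcal{G}^{\mathrm{BB}}_{\tp,\xi}$ is homogeneous of degree $\tp-1$ and $\xi^{\alpha+\beta}>0$, Theorem~\ref{BBpdrift} at $L=\xi$ gives $\phi\bigl(\mathcal{G}_{\tp,Q}(f_{\tp,Q})\bigr)=\xi^{(\alpha+\beta)(\tp-1)}\mathcal{G}^{\mathrm{BB}}_{\tp,\xi}(f^{\mathrm{BB}}_{\tp,\xi})=0$, whence $\mathcal{G}_{\tp,Q}(f_{\tp,Q})=0$.

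I expect no conceptual obstacle: the whole content is the identity $Q^{2}=-\xi^{2}$, after which everything passes through the field isomorphism $\phi$. The step requiring the most care is the intertwining itself — one must check that every intermediate quantity produced by $\mathcal{G}_{\tp,Q}$ on an $(\mathbb{R}\oplus\mathbb{R}\widehat Q)$-valued input stays in $\mathbb{R}\oplus\mathbb{R}\widehat Q$ (it does, since everything in sight is generated over $\mathbb{R}$ by the single element $Q$, so no real noncommutativity occurs), that $\norm{\nabla_{0}u}^{\tp-2}$ is a real scalar hence unaffected by $\phi$, and that the left multiplication by $Q$ in the drift term becomes multiplication by $\xi i$. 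As an alternative one may avoid Theorem~\ref{BBpdrift} and imitate the proof of Theorem~\ref{GrushinClaimA} verbatim with $\mu\rightsquigarrow\xi$ and $\omega\rightsquigarrow Q$: the formulas for $Y_{1}f$, $Y_{2}f$, $\norm{\nabla_{0}f}^{2}$, the four second-order terms, $\Delta_{\tp}f$, and the drift term transcribe unchanged, the only structural inputs being $g_{y_{1}}=\xi c(n+1)(y_{1}-a)^{n}$, $g_{y_{2}}=Q(n+1)=-h_{y_{2}}$, $h=\bar g$, $\abs{Q}^{2}=\xi^{2}$, $Q^{2}=-\xi^{2}$, and the precise form of $\alpha,\beta$.
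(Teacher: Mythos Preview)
Your argument is correct and takes a genuinely different route from the paper. The paper's own proof simply states that the computations are similar to those of Theorem~\ref{GrushinClaimA} and are left to the reader --- i.e., it advocates precisely the direct-computation alternative you sketch at the end (replace $\mu\rightsquigarrow\xi$, $\omega\rightsquigarrow Q$ and rerun the formulas for $Y_{1}f$, $Y_{2}f$, $\|\nabla_{0}f\|^{2}$, $\Delta_{\tp}f$, and the drift term). Your main approach instead exploits the identity $Q^{2}=-\xi^{2}$ to recognise that all the data live in the commutative subalgebra $\mathbb{R}\oplus\mathbb{R}\widehat Q\cong\mathbb{C}$, and then transports the problem through the ring isomorphism $\phi$ to the complex drift equation of Theorem~\ref{BBpdrift} at the real parameter $L=\xi$. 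This is more conceptual: it explains \emph{why} the hypothesis $L+M+N=0$ makes the quaternionic problem collapse to the already-known complex one, identifies $\xi$ intrinsically as $|Q|$, and replaces a page of calculus by a transport-of-structure argument. The paper's route, by contrast, is self-contained and does not invoke the external result of \cite{BB}. Your caveats are well placed: the only points that genuinely need checking are that every intermediate quantity stays in $\mathbb{R}\oplus\mathbb{R}\widehat Q$ (so no noncommutativity enters), that $\|\nabla_{0}f\|$ is real-valued and hence fixed by $\phi$, and that the positive real scalar $\xi^{\alpha+\beta}$ is absorbed by the $(\tp-1)$-homogeneity of $\mathcal{G}^{\mathrm{BB}}_{\tp,\xi}$ --- all of which you note.
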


\begin{proof}
The computations proving Theorem \ref{GrushinClaimB} are similar to those of the proof of Theorem \ref{GrushinClaimA} and are left to the reader.
\end{proof}

Observing that
\begin{eqnarray*}
    \xi \ne \pm \frac{n(\tp -1)}{n+2 - \tp} & \textmd{implies} & \tp \ne \abs{\frac{\xi(n+2) + n}{n+ \xi}}_,  \abs{\frac{\xi(n+2) - n}{n- \xi}}
\end{eqnarray*}
we have immediately the following corollary.
\begin{corollary}\label{gsmooth}
Let $\tp > \max\left\lbrace \abs{\frac{\xi(n+2) + n}{n+ \xi}}_,  \abs{\frac{\xi(n+2) - n}{n- \xi}} \right\rbrace $. Then the function $f_{\tp, Q}$ of Equation \ref{GrushinCoreA} is a nontrivial smooth solution to the Dirichlet problem
\begin{eqnarray*}
\left\{\begin{array}{cc}
\mathcal{G}_{\tp, Q} \left( f_{\tp, Q}(\mathbf{y}) \right) =0 & \mathbf{y} \in \mathbb{G}_n\setminus\{(a,b)\} \\
0 & \mathbf{y} = (a,b).
\end{array}\right.
\end{eqnarray*}
\end{corollary}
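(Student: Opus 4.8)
The analytic heart of the matter is already contained in Theorem~\ref{GrushinClaimA}, so the corollary only asks that we match hypotheses and then read off the regularity of $f_{\tp,Q}$ and its value at the puncture. I would first dispose of the parameter bookkeeping: clearing denominators in the implication recorded just before the corollary shows that $\tp \ne \bigl|\tfrac{\xi(n+2)+n}{n+\xi}\bigr|$ and $\tp \ne \bigl|\tfrac{\xi(n+2)-n}{n-\xi}\bigr|$ is equivalent to $\xi \ne \pm\tfrac{n+2-\tp}{n(\tp-1)}$, which is precisely the restriction under which $\alpha$ and $\beta$ are defined in Theorem~\ref{GrushinClaimA}. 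Hence, under the stated bound on $\tp$, that theorem applies verbatim and yields $\mathcal{G}_{\tp,Q}(f_{\tp,Q}) = 0$ on $\mathbb{G}_n\setminus\{(a,b)\}$.

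Next I would settle regularity and nontriviality on $\mathbb{G}_n\setminus\{(a,b)\}$. Since $Q^2 = -(L^2+M^2+N^2)$, we have $\omega^2 = -\mu^2$, so $g$ and $h$ take values in the commutative subalgebra $\mathbb{R}\oplus\mathbb{R}(\omega/\mu)\cong\mathbb{C}$ of $\mathbb{H}$; they are polynomial in $(y_1,y_2)$ and vanish only at $(a,b)$. Consequently $g^\alpha$, $h^\beta$, and hence $f_{\tp,Q}=g^\alpha h^\beta$, are $C^\infty$ on $\mathbb{G}_n\setminus\{(a,b)\}$ once a single-valued branch of the power is fixed, and (granting $\tp>n+2$, established below) the divergence-form operator $\mathcal{G}_{\tp,Q}$ is classically well defined there, so the equation holds in the classical sense. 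Nontriviality is then free: multiplicativity of the quaternion norm together with $h=\bar g$ gives $|f_{\tp,Q}| = |g|^{\alpha}|h|^{\beta} = |g|^{\alpha+\beta} > 0$ on $\mathbb{G}_n\setminus\{(a,b)\}$, so $f_{\tp,Q}$ is not identically zero.

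The only substantive point, and where I expect the real work to lie, is the boundary value at $(a,b)$. From $|f_{\tp,Q}| = |g|^{\alpha+\beta}$ together with the computations $\alpha+\beta = \tfrac{n+2-\tp}{(n+1)(1-\tp)}$ and $|g|^2 = \mu^2\bigl(c^2(y_1-a)^{2n+2}+(n+1)^2(y_2-b)^2\bigr)$, and since $|g|\to 0$ as $(y_1,y_2)\to(a,b)$, one sees that $f_{\tp,Q}$ extends continuously to $(a,b)$ with value $0$ precisely when $\alpha+\beta>0$, that is, when $\tp>n+2$; so the crux is to verify that the hypothesis $\tp > \max\{\cdots\}$ indeed forces $\tp>n+2$. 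One should also confirm that $g^\alpha h^\beta$ is globally single-valued on the punctured plane, which is governed by $\alpha-\beta = -\tfrac{\xi n}{n+1}$ and, through the parity of $n$, by the winding number of $g$ about $0$. Granting these checks, $f_{\tp,Q}$ is a nontrivial function, smooth on $\mathbb{G}_n\setminus\{(a,b)\}$, solving $\mathcal{G}_{\tp,Q}(f_{\tp,Q})=0$ there and attaining the datum $0$ at $(a,b)$ --- which is the assertion of the corollary.
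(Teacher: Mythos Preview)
Your approach coincides with the paper's: the corollary is deduced from Theorem~\ref{GrushinClaimA} once the parameter restriction $\xi \ne \pm\tfrac{n+2-\tp}{n(\tp-1)}$ is seen to follow from the stated bound on $\tp$. The paper offers no proof beyond that single parameter remark --- it declares the corollary to follow ``immediately'' --- so the regularity, nontriviality, and boundary-value arguments you supply already go well beyond what the authors write down.

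Your flagged concern is warranted and is not addressed by the paper: the hypothesis $\tp > \max\bigl\{\bigl|\tfrac{\xi(n+2)+n}{n+\xi}\bigr|,\bigl|\tfrac{\xi(n+2)-n}{n-\xi}\bigr|\bigr\}$ does \emph{not} in general force $\tp>n+2$. Indeed, as $\xi\to 0$ both quotients tend to $\pm 1$, so for small $|\xi|$ the hypothesis is barely stronger than $\tp>1$; yet $\alpha+\beta=\tfrac{n+2-\tp}{(n+1)(1-\tp)}$ is positive (and hence $|f_{\tp,Q}|=|g|^{\alpha+\beta}\to 0$ at $(a,b)$) only when $\tp>n+2$. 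Thus for small $|\xi|$ and $1<\tp<n+2$ the function blows up at $(a,b)$ rather than vanishing, and the boundary datum fails. The paper does not discuss this (compare Corollary~\ref{childerssmooth}, where $\tp>n+2$ is imposed explicitly), nor does it address your single-valuedness point; your proposal is already more careful than the source, and the residual ``crux'' you isolate appears to be a genuine lacuna in the corollary as stated rather than something you have overlooked.
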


\section{The Limit as $\tp \to \infty$}

\subsection{$\tp$-Laplacian Type Generalization over $\mathbb{H}$}
Recall that on $\mathbb{G}_n\setminus\{(a,b)\}$, we have
\begin{eqnarray*}
\overline{\Delta_{\tp}}f &=&  \divergence_G(\|\Upsilon\|^{\tp-2}\Upsilon) \\ 
&=&   \| \Upsilon \|^{\tp-4}\Bigg(\frac{1}{2}(\tp-2)\big(Y_1\| \Upsilon  \|^{2} \Upsilon_1+Y_2\| \Upsilon \|^{2} \Upsilon_2\big) +\| \Upsilon \|^{2}\big(Y_1 \Upsilon_1+Y_2 \Upsilon_2\big)\Bigg)  
\end{eqnarray*}
where $\Upsilon$ defined by
\begin{eqnarray*}
 \Upsilon  & \defeq & \left( \begin{array}{c}
 \Upsilon_1\\
 \Upsilon_2
\end{array} \right)
= \left( \begin{array}{c}
Y_1f+QY_2f\\
Y_2f-QY_1f
\end{array} \right)_.
\end{eqnarray*}
Formally letting $\tp \to \infty$, we obtain:
\begin{equation*}
\overline{\Delta_{\infty}}f = (Y_1\| \Upsilon  \|^{2} )\Upsilon_1+(Y_2\| \Upsilon \|^{2}) \Upsilon_2. 
\end{equation*}

\subsubsection{Case I: $L + M + N \ne 0$}\mbox{}\\

Formally letting $\tp \to \infty$ in Equation \ref{ChildersCoreA}, we obtain:
\begin{equation*}
f_{\infty,Q}(y_1,y_2) =\
g(y_1,y_2)^{\frac{1+\xi}{2n+2}}h(y_1,y_2)^{\frac{1-\xi}{2n+2}}
\end{equation*}
where we recall the functions $g(y_1, y_2)$ and $h(y_1, y_2)$ are given by:
\begin{eqnarray*}
g(y_1,y_2) & = & \mu c(y_1-a)^{n+1}+ \omega(n+1)(y_2-b)\\ 
h(y_1,y_2) & = & \mu c(y_1-a)^{n+1}- \omega(n+1)(y_2-b).
\end{eqnarray*}
We then have the following theorem.
\begin{theorem}\label{childersinftyA}
The function $f_{\infty,Q}$, as above, is a smooth solution to the Dirichlet problem
\begin{eqnarray*}
\left\{\begin{array}{cc}
\overline{\Delta_{\infty}}f_{\infty,Q}(\mathbf{y})=0 & \mathbf{y} \in \mathbb{G}_n\setminus\{(a,b)\} \\
0 & \mathbf{y} = (a,b).
\end{array}\right.
\end{eqnarray*}
\end{theorem}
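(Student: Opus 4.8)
The plan is to imitate the structure of the proof of Theorem~\ref{ChildersClaimA}, but now with $\tp$ formally set to $\infty$, so that the divergence-form operator collapses to the first-order expression $\overline{\Delta_\infty}f = (Y_1\|\Upsilon\|^2)\Upsilon_1 + (Y_2\|\Upsilon\|^2)\Upsilon_2$. First I would record the first-order data for $f_{\infty,Q} = g^{\tfrac{1+\xi}{2n+2}} h^{\tfrac{1-\xi}{2n+2}}$: writing $\alpha_\infty = \tfrac{1+\xi}{2n+2}$ and $\beta_\infty = \tfrac{1-\xi}{2n+2}$, these are exactly the $\tp\to\infty$ limits of the parameters $\alpha,\beta$ in Theorem~\ref{ChildersClaimA} (since $\tfrac{n+2-\tp}{(1-\tp)(2n+2)}\to\tfrac{1}{2n+2}$), so the formulas for $Y_1 f$, $Y_2 f$, $\Upsilon_1 = Y_1 f + Q Y_2 f$, $\Upsilon_2 = Y_2 f - Q Y_1 f$, and $\|\Upsilon\|^2$ are obtained verbatim from the displayed computations in that proof with $\alpha,\beta$ replaced by $\alpha_\infty,\beta_\infty$. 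In particular $\Upsilon_1$ is a scalar multiple of $\alpha_\infty h(1-\xi) + \beta_\infty g(1+\xi)$ and $\Upsilon_2$ of $\alpha_\infty h(1-\xi) - \beta_\infty g(1+\xi)$, while $\|\Upsilon\|^2$ carries a factor $g^{\alpha_\infty+\beta_\infty-1}h^{\alpha_\infty+\beta_\infty-1}$.

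Next I would compute $Y_1\|\Upsilon\|^2$ and $Y_2\|\Upsilon\|^2$ directly. The key simplification is that $\alpha_\infty+\beta_\infty = \tfrac{1}{n+1}$, and $\alpha_\infty(1-\xi) \cdot$ something combined with $\beta_\infty(1+\xi)$: in fact $\alpha_\infty(1-\xi)$ and $\beta_\infty(1+\xi)$ are related through the defining relation, and one checks (exactly as the $(1-\xi^2)$ factors appear in Theorem~\ref{ChildersClaimA}) that the relevant combinations collapse. Then I would form the sum $\sum_{s=1}^2 (Y_s\|\Upsilon\|^2)\Upsilon_s$ and show it vanishes identically. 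The mechanism should be the same sign-cancellation that makes $\Lambda = 0$ in Theorem~\ref{ChildersClaimA}: there, $\tfrac{\tp-2}{2}\sum Y_s\|\Upsilon\|^2\Upsilon_s$ (see \eqref{gpart1}) and $\|\Upsilon\|^2(Y_1\Upsilon_1 + Y_2\Upsilon_2)$ are negatives of one another; here, since the second-order piece is absent, the claim is the stronger statement that $\sum (Y_s\|\Upsilon\|^2)\Upsilon_s$ is itself zero. I expect this to follow because $Y_1\|\Upsilon\|^2$ is proportional to $\Upsilon_1$-type data and $Y_2\|\Upsilon\|^2$ to $\Upsilon_2$-type data in such a way that the cross terms cancel; concretely, both $Y_1\|\Upsilon\|^2$ and $Y_2\|\Upsilon\|^2$ contain the common factor $g^{\alpha_\infty+\beta_\infty-2}h^{\alpha_\infty+\beta_\infty-2}$ times, respectively, expressions that pair against $\Upsilon_1$ and $\Upsilon_2$ to produce $g^{2\alpha_\infty+\beta_\infty-2}h^{\alpha_\infty+2\beta_\infty-2}$ terms with opposite signs.

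The main obstacle I anticipate is bookkeeping rather than conceptual: one must carry the quaternion coefficients $\mu$, $\omega$, $Q$ (which do not commute in general, though here they appear only as overall scalar-like factors since $\omega = Q/(L+M+N)$ is a fixed quaternion and $g,h$ are built from real multiples of $(y_1-a)^{n+1}$ and $\omega(y_2-b)$) through the differentiations without error, and verify that the $\xi$-dependent numerical factors — specifically combinations like $\alpha_\infty(1-\xi)$, $\beta_\infty(1+\xi)$, and $(1-\xi^2)$ — cancel exactly. Because $g$ and $h$ are "conjugate" ($h$ is $g$ with the sign of the $\omega(y_2-b)$ term flipped), $Y_1 g = Y_1 h = \mu c(n+1)(y_1-a)^n$ while $Y_2 g = \omega c(n+1)(y_1-a)^n = -Y_2 h$, which makes the two horizontal derivatives of $\|\Upsilon\|^2$ structurally parallel and is the ultimate source of the cancellation. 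I would therefore organize the computation around these two identities.

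Finally, for the boundary/regularity claims: smoothness of $f_{\infty,Q}$ on $\mathbb{G}_n\setminus\{(a,b)\}$ follows because $g$ and $h$ are polynomials that vanish simultaneously only at $(a,b)$ (their real and imaginary-type parts vanish together only there), so $f_{\infty,Q}$ is a smooth power of nonvanishing quantities away from that point; and $f_{\infty,Q}$ extends continuously by $0$ at $(a,b)$ provided the exponents $\tfrac{1\pm\xi}{2n+2}$ are such that $|g|^{\alpha_\infty}|h|^{\beta_\infty}\to 0$ there, which holds since $\alpha_\infty + \beta_\infty = \tfrac{1}{n+1} > 0$ and both $|g|,|h| = O(\|(y_1-a,y_2-b)\|)$ near $(a,b)$ in the Carnot--Carathéodory sense — exactly as in Corollary~\ref{childerssmooth}. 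I would state these two points briefly and defer to the pattern already established in the paper.
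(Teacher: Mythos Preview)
Your proposal is correct and follows essentially the same route as the paper's proof: set $A=\tfrac{1+\xi}{2n+2}$, $B=\tfrac{1-\xi}{2n+2}$ (your $\alpha_\infty,\beta_\infty$), carry over the formulas for $Y_sf$, $\Upsilon_s$, and $\|\Upsilon\|^2$ from Theorem~\ref{ChildersClaimA}, compute $Y_s\|\Upsilon\|^2$ explicitly, and then verify that $(Y_1\|\Upsilon\|^2)\Upsilon_1$ and $(Y_2\|\Upsilon\|^2)\Upsilon_2$ are exact negatives of one another. The paper also notes, as you do, the alternative of passing to the limit in \eqref{gpart1} via Corollary~\ref{childerssmooth}; your identification of $A+B=\tfrac{1}{n+1}$ as the source of the simplification and your remarks on smoothness and the boundary value match the paper's treatment.
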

\begin{proof}
We may prove this theorem by letting $\tp\to\infty$ in a prudent multiple of Equation \eqref{gpart1} and invoking continuity (cf. Corollary \ref{childerssmooth}). For completeness, though, we compute formally. We let:
\begin{equation*}
A = \frac{1+\xi}{2n+2} \quad\text{and}\quad B = \frac{1-\xi}{2n+2}
\end{equation*}
and compute:
\begin{eqnarray*}
Y_1 f &=& \mu c(n+1)(y_1 - a)^n g^{A -1} h^{B -1} (A h + B g )\\
Y_2 f &=& \omega c(n+1)(y_1-a)^n g^{A -1} h^{B -1} \left( A h - B g \right)\\
Y_1 f + Q Y_2 f &=& \mu c(n+1) (y_1 - a)^n g^{A -1} h^{B -1} \left( A h(1 - \xi) + B g (1 + \xi) \right)\\
Y_2 f - Q Y_1 f &=& \omega c(n+1)(y_1-a)^n g^{A -1} h^{B -1} \left( A h (1 - \xi) - B g (1 + \xi)  \right)\\
\norm{\Upsilon}^2 &=& 2 \mu^2 c^2 (n+1)^2 (y_1 - a)^{2n} g^{A + B -1} h^{A +B -1}  \left( A^2 (1-\xi)^2 + B^2(1 + \xi)^2  \right).
\end{eqnarray*}
We then have:
\begin{eqnarray*}
Y_1 \norm{\Upsilon}^2 &=& 2 \mu^2 c^2(1-\xi^2)^2n(n+1)^2(y_1-a)^{2n-1}(y_2-b)^2(gh)^{\frac{-1-2n}{n+1}}\\
Y_2 \norm{\Upsilon}^2 &=& 2 \omega \mu c^3(1-\xi^2)^2n(n+1)(y_1-a)^{3n}(y_2-b)(gh)^{\frac{-1-2n}{n+1}}
\end{eqnarray*}
so that:
\begin{eqnarray*}
Y_1\|\xi\|^2 \xi_1 & = &2 \mu^3 c^4(1-\xi^2)^3n(n+1)^2(y_1-a)^{4n}(y_2-b)^2(gh)^{\frac{-1-2n}{n+1}}g^{A-1}h^{B-1}\\
Y_2\|\xi\|^2 \xi_2 & = &  -2 \mu^3 c^4(1-\xi^2)^3n(n+1)^2(y_1-a)^{4n}(y_2-b)^2(gh)^{\frac{-1-2n}{n+1}} g^{A-1}h^{B-1}.
\end{eqnarray*}
The theorem follows.
\end{proof}

\subsubsection{Case II: $L + M + N = 0$}\mbox{}\\

Formally letting $\tp \to \infty$ in Equation \ref{ChildersCoreB}, we obtain:
\begin{equation*}
f_{\infty,Q}(y_1,y_2) =\
g(y_1,y_2)^{\frac{1+\xi}{2n+2}}h(y_1,y_2)^{\frac{1-\xi}{2n+2}}
\end{equation*}
where we recall the functions $g(y_1, y_2)$ and $h(y_1, y_2)$ are given by:
\begin{eqnarray*}
g(y_1,y_2) & = & \xi c(y_1-a)^{n+1}+ Q(n+1)(y_2-b)\\ 
h(y_1,y_2) & = & \xi c(y_1-a)^{n+1}- Q(n+1)(y_2-b).
\end{eqnarray*}
We then have the following theorem.
\begin{theorem}\label{childersinftyB}
The function $f_{\infty,Q}$, as above, is a smooth solution to the Dirichlet problem
\begin{eqnarray*}
\left\{\begin{array}{cc}
\overline{\Delta_{\infty}}f_{\infty,Q}(\mathbf{y})=0 & \mathbf{y} \in \mathbb{G}_n\setminus\{(a,b)\} \\
0 & \mathbf{y} = (a,b).
\end{array}\right.
\end{eqnarray*}
\end{theorem}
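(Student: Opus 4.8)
The plan is to run the computation behind Theorem~\ref{childersinftyA} under the dictionary that converts Case~I into the present case, where the auxiliary constants $\mu,\omega$ of Case~I are replaced by $\xi$ and $Q$. The single algebraic fact that makes the two arguments coincide is that $Q=Li+Mj+Nk$ satisfies $Q^2=-(L^2+M^2+N^2)\in\mathbb{R}$, and the constraint $L+M+N=0$ gives $L^2+M^2+N^2=-2(LM+LN+MN)=\xi^2$, so that $Q^2=-\xi^2$ and $\abs{Q}^2=\xi^2$. Consequently $g$ and $h$, together with every quantity built from them below, lie in the commutative subalgebra $\mathbb{R}\oplus\mathbb{R}Q\cong\mathbb{C}$, so the non-commutativity of $\mathbb{H}$ never intervenes; in fact $g=\xi\big(c(y_1-a)^{n+1}+(Q/\xi)(n+1)(y_2-b)\big)$ with $(Q/\xi)^2=-1$, i.e.\ $g$ is $\xi$ times a copy of the complex function $g$ of Theorem~\ref{MG} (and $\bar g=h$, $\bar h=g$).

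With $A=\frac{1+\xi}{2n+2}$ and $B=\frac{1-\xi}{2n+2}$, the formal $\tp\to\infty$ limits of $\alpha,\beta$ (so $A+B=\frac{1}{n+1}$), I would compute $Y_1f$ and $Y_2f$ exactly as in Theorem~\ref{childersinftyA}: these acquire factors $\xi$ and $Q$ respectively from $Y_1g=\xi c(n+1)(y_1-a)^n$ and $Y_2g=Qc(n+1)(y_1-a)^n$. Using $Q^2=-\xi^2$, I would assemble
\begin{equation*}
\Upsilon_1=\xi c(n+1)(y_1-a)^n g^{A-1}h^{B-1}\big(Ah(1-\xi)+Bg(1+\xi)\big),
\end{equation*}
\begin{equation*}
\Upsilon_2=Q c(n+1)(y_1-a)^n g^{A-1}h^{B-1}\big(Ah(1-\xi)-Bg(1+\xi)\big),
\end{equation*}
and, from $\norm{\Upsilon}^2=\abs{\Upsilon_1}^2+\abs{\Upsilon_2}^2$ with the quaternion modulus, $\norm{\Upsilon}^2=2\xi^2c^2(n+1)^2(y_1-a)^{2n}(gh)^{A+B-1}\big(A^2(1-\xi)^2+B^2(1+\xi)^2\big)$: since $\bar\xi=\xi$ while $\bar Q=-Q$, the two summands contribute the diagonal term with the same sign and the cross term $\propto g^2+h^2$ with opposite signs, and the latter cancels (the $\abs{Q}^2=\xi^2$ mechanism). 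Then, using $gh=\xi^2\big(c^2(y_1-a)^{2n+2}+(n+1)^2(y_2-b)^2\big)$, I would differentiate $\norm{\Upsilon}^2$ in $Y_1$ and $Y_2$; as in Theorem~\ref{childersinftyA} the $(y_1-a)^{2n+2}$ contribution drops out because $n+(A+B-1)(n+1)=0$, leaving
\begin{equation*}
(Y_1\norm{\Upsilon}^2)\Upsilon_1=-(Y_2\norm{\Upsilon}^2)\Upsilon_2,
\end{equation*}
whence $\overline{\Delta_\infty}f_{\infty,Q}=(Y_1\norm{\Upsilon}^2)\Upsilon_1+(Y_2\norm{\Upsilon}^2)\Upsilon_2=0$ on $\mathbb{G}_n\setminus\{(a,b)\}$. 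There $f_{\infty,Q}=g^Ah^B$ is smooth because $g$ and $h$ vanish only at $(a,b)$; and since $\abs{g}=\abs{h}=\xi\sqrt{c^2(y_1-a)^{2n+2}+(n+1)^2(y_2-b)^2}$, we get $\abs{f_{\infty,Q}}=\abs{g}^{A+B}=\abs{g}^{1/(n+1)}\to 0$ as $\mathbf{y}\to(a,b)$, giving the boundary value $0$.

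A quicker alternative bypasses the explicit limit: Theorem~\ref{ChildersClaimB} supplies, for finite $\tp\ne n+2$, a cancellation identity of the shape of Equation~\eqref{gpart1}, in which both $\tfrac{\tp-2}{2}\sum_{s=1}^2 Y_s\norm{\Upsilon}^2\Upsilon_s$ and $\norm{\Upsilon}^2(Y_1\Upsilon_1+Y_2\Upsilon_2)$ equal, up to sign, a common quantity carrying the factor $(n+2-\tp)^3(\tp-2)(-1+\xi^2)^3$; dividing by $(n+2-\tp)$ and letting $\tp\to\infty$ annihilates the divergence-structure term and leaves precisely $\overline{\Delta_\infty}f_{\infty,Q}$, which then vanishes by continuity in $\tp$ (as indicated in the proof of Theorem~\ref{childersinftyA}). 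In either route the only real work is bookkeeping — carrying the quaternionic scalars $\xi,Q,Q^2,\abs{Q}^2$ through the derivative chain — and I expect no genuinely new obstruction: the identities $Q^2=-\xi^2$ and $\abs{Q}^2=\xi^2$ are exactly what reproduces, coefficient for coefficient, the cancellation already verified in Case~I under the substitution $\mu\mapsto\xi$, $\omega\mapsto Q$.
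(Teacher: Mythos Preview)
Your proposal is correct and follows essentially the same approach as the paper, which merely states that the proof ``is similar to that of Theorem~\ref{childersinftyA} and omitted.'' Your explicit identification of the dictionary $\mu\mapsto\xi$, $\omega\mapsto Q$ together with the key identities $Q^2=-\xi^2$ and $\abs{Q}^2=\xi^2$ (and the resulting commutativity of $\mathbb{R}\oplus\mathbb{R}Q$) makes transparent exactly why the Case~I computation transfers verbatim, which the paper leaves to the reader.
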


\begin{proof}
The proof of Theorem \ref{childersinftyB} is similar to that of Theorem \ref{childersinftyA} and omitted.
\end{proof}

\subsection{Generalization of the Drift Term over $\mathbb{H}$}

Recall that the drift $\tp$-Laplace equation in the Grushin-type planes $\mathbb{G}_n$ is given by:
\begin{equation*}
\mathcal{G}_{\tp,Q}(f) \defeq \Delta_\tp f + Q [Y_1, Y_2] \left( \norm{\nabla_0 f}_\mathbb{G}^{\tp-2} f \right) = 0 \:.
\end{equation*}
A routine expansion of the drift term yields the observation
\begin{eqnarray*}
\mathcal{G}_{\tp,Q}(f) &=& \Delta_\tp f + Q cn (y_1 -a)^{n-1} \left( \frac{\tp-2}{2} \norm{\nabla_0 f}^{\tp-4}_\mathbb{G} \left( \dfrac{\partial}{\partial y_2} \norm{\nabla_0 f}^{2}_\mathbb{G}  \right) f + \norm{\nabla_0 f}^{\tp-2}_\mathbb{G} \dfrac{\partial}{\partial y_2}f  \right)\\
&=& 0.
\end{eqnarray*}
Dividing through by $\frac{\tp-2}{2} \norm{\nabla_0 f}^{\tp-4}_\mathbb{G}$ and formally taking the limit $\tp \to \infty$, we obtain:
\begin{equation*}
\mathcal{G}_{\infty, Q}(f) = \Delta_\infty f + Q [Y_1, Y_2] \left( \norm{\nabla_0 f}_\mathbb{G}^{2} \right) f.
\end{equation*}

\subsubsection{Case I: $L + M + N \ne 0$}
Considering Equation \ref{GrushinCoreA} and formally letting $\tp \to \infty$ yields:
\begin{equation*}
f_{\infty, Q}(y_1, y_2) = g(y_1, y_2)^{\frac{1}{2(n+1)}(1-n\xi)} h(y_1, y_2)^{\frac{1}{2(n+1)}(1+n\xi)} 
\end{equation*}
where we recall the functions $g(y_1, y_2)$ and $h(y_1, y_2)$ are given by:
\begin{eqnarray*}
g(y_1, y_2) &=& \mu c(y_1 - a)^{n+1} + \omega (n+1)(y_2-b)\\
h(y_1, y_2) &=& \mu c(y_1 - a)^{n+1} - \omega (n+1)(y_2-b).
\end{eqnarray*}

We have the following theorem.
\begin{theorem}\label{inftyGrushinBBA}
The function $f_{\infty,Q}$, as above, is a smooth solution to the Dirichlet problem
\begin{eqnarray*}
\left\{\begin{array}{cc}
\mathcal{G}_{\infty,Q} f_{\infty,Q}(\mathbf{y})=0 & \mathbf{y} \in \mathbb{G}_n\setminus\{(a,b)\} \\
0 & \mathbf{y} = (a,b).
\end{array}\right.
\end{eqnarray*}
\end{theorem}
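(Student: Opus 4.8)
The plan is to mirror the structure of the proof of Theorem~\ref{GrushinClaimA}, but with the $\tp$-dependent exponents $\alpha,\beta$ replaced by their formal limits
\begin{equation*}
A = \frac{1-n\xi}{2(n+1)} \quad\text{and}\quad B = \frac{1+n\xi}{2(n+1)},
\end{equation*}
and with the operator $\mathcal{G}_{\tp,Q}$ replaced by its formal limit
\begin{equation*}
\mathcal{G}_{\infty,Q}(f) = \Delta_\infty f + Q\,[Y_1,Y_2]\!\left(\norm{\nabla_0 f}_\mathbb{G}^2\right) f,
\end{equation*}
where $\Delta_\infty f = (Y_1\norm{\nabla_0 f}^2)(Y_1 f) + (Y_2\norm{\nabla_0 f}^2)(Y_2 f)$. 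The cleanest route is the one hinted at for Theorem~\ref{childersinftyA}: rather than recomputing everything, rescale the identity already proved. In the proof of Theorem~\ref{GrushinClaimA} the quantity $\Delta_\tp f$ was shown to equal an explicit monomial in $(y_1-a)$, $g$, $h$, times the factor $\big(\xi\mu c(y_1-a)^{n+1} + \omega(1-\tp)(n+1)(y_2-b)\big)$, and $Q[Y_1,Y_2](\norm{\nabla_0 f}^{\tp-2}f)$ was shown to be exactly its negative. Dividing the whole identity by $\tfrac{\tp-2}{2}\norm{\nabla_0 f}^{\tp-4}$ — a nonvanishing smooth factor on $\mathbb{G}_n\setminus\{(a,b)\}$ — and letting $\tp\to\infty$ with the exponents $\alpha(\tp)\to A$, $\beta(\tp)\to B$ uniformly on compact subsets, continuity of all the elementary operations gives $\mathcal{G}_{\infty,Q}(f_{\infty,Q})=0$ directly.

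For a self-contained computation (which I would include for completeness, as the authors did in Theorem~\ref{childersinftyA}), the key steps in order are: first, record $Y_1 f$, $Y_2 f$, and $\norm{\nabla_0 f}^2$ for $f = g^A h^B$ using $Y_1 g = Y_1 h = \mu c(n+1)(y_1-a)^n$ and $Y_2 g = -Y_2 h = \omega c(n+1)(y_1-a)^n$ (these are the $\tp\to\infty$ specializations of Equations~\eqref{Y1A}, \eqref{Y2A}); second, compute $Y_1\norm{\nabla_0 f}^2$ and $Y_2\norm{\nabla_0 f}^2$ — here the crucial algebraic simplification is that $A+B-1 = \tfrac{1}{n+1} - 1 = \tfrac{-n}{n+1}$, so the term proportional to $(\alpha+\beta-1)$ in Equations~\eqref{Y1normsqA} and~\eqref{Y2normsqA} does not vanish but combines with the $ngh$ term; third, form $\Delta_\infty f = \sum_s (Y_s\norm{\nabla_0 f}^2)(Y_s f)$ and simplify to a single monomial in $(y_1-a)$, $(y_2-b)$, $g$, $h$; fourth, compute $[Y_1,Y_2]\big(\norm{\nabla_0 f}^2\big) = cn(y_1-a)^{n-1}\,\partial_{y_2}\norm{\nabla_0 f}^2$ (since the only nonzero bracket is $[Y_1,Y_2] = cn(y_1-a)^{n-1}\partial_{y_2}$), multiply by $Q$ and by $f = g^A h^B$; fifth, observe that $Q\omega = Q\cdot\tfrac{Q}{L+M+N} = \tfrac{Q^2}{L+M+N}$ and $\xi\mu = \sqrt{\abs{Q^2}}(L+M+N)\cdot\tfrac{\sqrt{\abs{Q^2}}}{\abs{L+M+N}}$, the identities that reconcile the two terms, and verify the drift term is exactly $-\Delta_\infty f$.

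The main obstacle — such as it is — is the bookkeeping in step five: one must keep careful track of how the quaternion scalar $Q$ interacts with the real scalars $\mu,\omega,\xi$ and with the factors of $g,h$ (which are themselves quaternion-valued), so that the drift contribution lands as the exact negative of $\Delta_\infty f$. The sign and the power $n\xi$ (rather than $\xi$) in the exponents $A,B$ are precisely what is forced by this cancellation, exactly as in Theorem~\ref{GrushinClaimA}; since $\alpha$ and $\beta$ there depend continuously on $\tp$ and converge to $A$ and $B$, no genuinely new algebra arises, and the rescale-and-take-limits argument of the first paragraph makes the cancellation automatic. Finally, $f_{\infty,Q}$ is smooth on $\mathbb{G}_n\setminus\{(a,b)\}$ because $g$ and $h$ are nonvanishing polynomials there (their simultaneous vanishing forces $(y_1,y_2)=(a,b)$), and it extends by $0$ at $(a,b)$ since the exponents $A,B$ have positive real part under the standing hypothesis, giving the stated Dirichlet problem.
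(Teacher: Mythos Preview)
Your proposal is correct and follows essentially the same approach as the paper: both first note that the result can be obtained by dividing the identity from Theorem~\ref{GrushinClaimA} by $\tfrac{\tp-2}{2}\norm{\nabla_0 f}^{\tp-4}$ and letting $\tp\to\infty$ via continuity (citing Corollary~\ref{gsmooth}), and then carry out the direct computation with the limiting exponents $A=\tfrac{1-n\xi}{2(n+1)}$, $B=\tfrac{1+n\xi}{2(n+1)}$, computing $Y_sf$, $\norm{\nabla_0 f}^2$, $Y_s\norm{\nabla_0 f}^2$, assembling $\Delta_\infty f$, and matching it against $Q[Y_1,Y_2](\norm{\nabla_0 f}^2)f$. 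Your identification of $A+B-1=-n/(n+1)$ as the key simplification and of the $Q\omega$, $\xi\mu$ reconciliation in step five is exactly the algebra that drives the paper's cancellation; the only caveat is that your final remark about $A,B$ having positive real part ``under the standing hypothesis'' is not automatic (the hypothesis only excludes $\xi=\pm 1/n$, not $|\xi|>1/n$), though the paper's own proof likewise does not address the behavior at $(a,b)$.
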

\begin{proof}
We may prove this theorem by letting $\tp\to\infty$ in Equations \eqref{Y1A}, \eqref{Y2A}, \eqref{Y1normsqA}, \eqref{Y2normsqA} and invoking continuity (cf. Corollary \ref{gsmooth}). However, for completeness we compute formally. We let:
\begin{eqnarray*}
A = \frac{1}{2(n+1)}(1-n\xi) & \textmd{and} & B = \frac{1}{2(n+1)}(1+n\xi)
\end{eqnarray*}
and, suppressing arguments and subscripts, compute:
\begin{eqnarray*}
Y_1 f &=& \mu c (n+1)(y_1 - a)^n g^{A-1} h^{B-1} (A h + B g) \\
Y_2 f &=& \omega c (n+1) (y_1 - a)^n g^{A-1} h^{B-1} (A h - B g)\\
\norm{\nabla_0 f}^2 &=& 2 \mu^2 c^2 (n+1)^2 (y_1 - a)^{2n} g^{A + B -1} h^{A + B -1} \left( A^2  + B^2  \right) \\
Y_1 \norm{\nabla_0 f}^2 &=& 4 \mu^2 c^2 (n+1)^2 (y_1 - a)^{2n-1} g^{A + B -2} h^{A + B-2} (A^2  +B^2) \times \\ 
    &&\quad \big(  ngh  +   \mu^2 c^2 (n+1) (y_1 - a)^{2n+2}(A + B - 1) \big) \\
Y_2 \norm{\nabla_0 f}^2 &=& -4 \omega^2 \mu^2 c^3 (n+1)^4 (y_1 - a)^{3n}  (y_2 - b) g^{A + B -2} h^{A + B -2} \big( A^2  + B^2  \big)  (A + B -1)
\end{eqnarray*}
so that:
\begin{eqnarray*}
\Delta_\infty f &=& Y_1 \norm{ \nabla_0 f }^2 Y_1 f  + Y_2 \norm{ \nabla_0 f }^2 Y_2 f\\
&=& 4 \mu^3 c^3 (n+1)^3 (A^2+B^2)(y_1-a)^{3n-1}g^{2A+B-3}h^{A+2B-3}\\
&&\quad \times\Big((A h+B g)\big(ngh+ \mu^2 c^2(n+1)(A +B -1)(y_1-a)^{2n+2}\big)\\
&&\quad +\: \omega \mu c(n+1)^2(y_1-a)^{n+1}(y_2-b)(A +B-1)(A h -B g)\Big)\\
&=& 4 \xi \omega \mu^3 c^3 n^2 (n+1)^3 (y_1 - a)^{3n-1} (y_2 - b) g^{2A + B -2} h^{A + 2B -2} (A^2  + B^2).
\end{eqnarray*}
We also compute:
\begin{eqnarray*}
Q [Y_1, Y_2] \left( \norm{\nabla_0 f}_\mathbb{G}^{2} \right) f &=&  Q g^A h^B \left( cn(y_1 - a)^{n-1} \dfrac{\partial}{\partial y_2}  \norm{\nabla_0 f}^2 \right)\\
&=& -4 \xi \omega  \mu^3 c^3 n^2 (n+1)^3 (y_1 - a)^{3n-1} (y_2 - b)  g^{A + B -2} h^{A + B -2}  \big( A^2  + B^2  \big)
\end{eqnarray*}
The theorem follows.
\end{proof}

\subsubsection{Case II: $L + M + N = 0$} Considering Equation \ref{GrushinCoreB} and formally letting $\tp \to \infty$ yields:
\begin{equation*}
f_{\infty, Q}(y_1, y_2) = g(y_1, y_2)^{\frac{1}{2(n+1)}(1-n\xi)} h(y_1, y_2)^{\frac{1}{2(n+1)}(1+n\xi)} 
\end{equation*}
where we recall the functions $g(y_1, y_2)$ and $h(y_1, y_2)$ are given by:
\begin{eqnarray*}
g(y_1, y_2) &=& \xi c(y_1 - a)^{n+1} + Q (n+1)(y_2-b)\\
h(y_1, y_2) &=& \xi c(y_1 - a)^{n+1} - Q (n+1)(y_2-b).
\end{eqnarray*}
We have the following theorem.
\begin{theorem}\label{inftyGrushinBBB}
The function $f_{\infty,Q}$, as above, is a smooth solution to the Dirichlet problem
\begin{eqnarray*}
\left\{\begin{array}{cc}
\mathcal{G}_{\infty,Q} f_{\infty,Q}(\mathbf{y})=0 & \mathbf{y} \in \mathbb{G}_n\setminus\{(a,b)\} \\
0 & \mathbf{y} = (a,b).
\end{array}\right.
\end{eqnarray*}
\end{theorem}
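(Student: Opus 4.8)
The plan is to reproduce, essentially line for line, the formal computation used for Theorem~\ref{inftyGrushinBBA}, under the substitution $\mu \mapsto \xi$, $\omega \mapsto Q$ dictated by the Case~II data, once the single algebraic identity underlying Case~I has been re-established in the present situation. First I would set
\[
A = \frac{1}{2(n+1)}(1 - n\xi) \qquad\text{and}\qquad B = \frac{1}{2(n+1)}(1 + n\xi),
\]
so that $f_{\infty,Q} = g^{A} h^{B}$ with $A + B = \tfrac{1}{n+1}$, and then record $Y_1 f$, $Y_2 f$, $\norm{\nabla_0 f}^2_\mathbb{G}$, $Y_1 \norm{\nabla_0 f}^2_\mathbb{G}$ and $Y_2 \norm{\nabla_0 f}^2_\mathbb{G}$ by the same differentiations performed in the proof of Theorem~\ref{inftyGrushinBBA}. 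It is worth noting at the outset that, because $Q$ is purely quaternion-imaginary, both $g = \xi c(y_1-a)^{n+1} + Q(n+1)(y_2-b)$ and $h = \xi c(y_1-a)^{n+1} - Q(n+1)(y_2-b)$ lie in the commutative real subalgebra $\mathbb{R} \oplus \mathbb{R}Q \subset \mathbb{H}$; hence $g$, $h$ and $Q$ commute pairwise, all the powers $g^{A}h^{B}$ and their derivatives are unambiguous, and conjugation acts by $\bar{g} = h$, $\bar{h} = g$ (since $\bar{Q} = -Q$ and $\xi \in \mathbb{R}$), which is exactly what makes $\norm{\nabla_0 f}^2_\mathbb{G} = \abs{Y_1 f}^2 + \abs{Y_2 f}^2$ reduce to a scalar multiple of $(gh)^{A+B-1}$.

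The one substantive point is the algebraic relation supplied by the hypothesis $L + M + N = 0$:
\[
\xi^2 = 2\abs{LM + LN + MN} = L^2 + M^2 + N^2 = \abs{Q}^2 = -Q^2 ,
\]
where the second equality uses $(L+M+N)^2 = 0$, and $LM + LN + MN < 0$ because $Q \notin \mathbb{R}$ forces $L^2 + M^2 + N^2 > 0$. This identity $Q^2 = -\xi^2$ is the precise Case~II analogue of the relation $\omega^2 = -\mu^2$ used throughout Case~I: it gives $gh = \xi^2\big(c^2(y_1-a)^{2n+2} + (n+1)^2(y_2-b)^2\big)$ and drives the same cancellations, so every simplification in the proof of Theorem~\ref{inftyGrushinBBA} carries over verbatim with $\xi^2$ in place of $\mu^2$ and $Q$ in place of $\omega$. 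Carrying this out, $\Delta_\infty f = Y_1\norm{\nabla_0 f}^2_\mathbb{G}\,Y_1 f + Y_2\norm{\nabla_0 f}^2_\mathbb{G}\,Y_2 f$ collapses to a single monomial in $g$, $h$ and $(y_1-a)$ carrying a factor of $Q$ and a factor of $(y_2-b)$, exactly as in the final display of the proof of Theorem~\ref{inftyGrushinBBA}.

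It then remains to expand the limiting drift term $Q[Y_1,Y_2]\big(\norm{\nabla_0 f}^2_\mathbb{G}\big) f = Q\, g^{A} h^{B}\, c n (y_1 - a)^{n-1}\, \tfrac{\partial}{\partial y_2}\norm{\nabla_0 f}^2_\mathbb{G}$, to observe (using $A + B - 1 = -n/(n+1)$) that it is the negative of $\Delta_\infty f$, and to conclude $\mathcal{G}_{\infty,Q}(f_{\infty,Q}) = 0$ on $\mathbb{G}_n \setminus \{(a,b)\}$; smoothness off $(a,b)$ and the boundary value at $(a,b)$ are immediate from the product form of $f_{\infty,Q}$ and the nonvanishing of $gh$ away from $(a,b)$. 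I expect the only genuine obstacle to be bookkeeping---checking that no step of the Case~I derivation implicitly used $\mu \in \mathbb{R}$ in a manner not covered by $\xi \in \mathbb{R}$---but since $\xi$ is real and $Q$ commutes with $g$ and $h$, the situation is identical to Theorem~\ref{inftyGrushinBBA} and no new difficulty arises. Alternatively, and more economically, one can bypass the formal computation altogether by taking $\tp \to \infty$ in a suitable constant multiple of the Case~II counterpart of the identity behind Theorem~\ref{GrushinClaimB} and appealing to continuity, just as indicated for Theorem~\ref{inftyGrushinBBA} via Corollary~\ref{gsmooth}.
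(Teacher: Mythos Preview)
Your proposal is correct and follows exactly the approach the paper intends: the paper's proof of Theorem~\ref{inftyGrushinBBB} simply reads ``similar to that of Theorem~\ref{inftyGrushinBBA} and omitted,'' and you have spelled out precisely what that similarity entails, including the key Case~II identity $Q^2 = -\xi^2$ (the analogue of $\omega^2 = -\mu^2$) and the commutativity of $\mathbb{R}\oplus\mathbb{R}Q$ that legitimizes the manipulations. Your alternative route via $\tp\to\infty$ and Corollary~\ref{gsmooth} is likewise exactly what the paper suggests for Theorem~\ref{inftyGrushinBBA}.
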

\begin{proof}
The proof of Theorem \ref{inftyGrushinBBB} is similar to that of Theorem \ref{inftyGrushinBBA} and omitted.
\end{proof}


\end{document}